\documentclass[oneside,12pt]{amsart}

\usepackage{amsmath,amssymb,amsthm,amsfonts,amstext,amsbsy,amscd,mathrsfs,graphics,graphicx}
\usepackage{amsxtra}
\usepackage[english]{babel}

\usepackage[T1]{fontenc}
\usepackage{latexsym}
\usepackage[latin1]{inputenc}
\setcounter{MaxMatrixCols}{10}
\usepackage{color}


\newtheorem{theorem}{Theorem}[section]
\newtheorem{lemma}[theorem]{Lemma}

\newtheorem{corollary}[theorem]{Corollary}
\newtheorem{proposition}[theorem]{Proposition}
\newtheorem{example}[theorem]{Example}
\newtheorem{remark}[theorem]{Remark}

\newtheorem{definition}[theorem]{Definition}
\def\bit{\begin{itemize}}
\def\eit{\end{itemize}}
\reversemarginpar   
\def\bc{\begin{center}}
\def\ec{\end{center}}
\def\bthm{\begin{theorem}}
\def\ethm{\end{theorem}}
\def\bcor{\begin{corollary}}
\def\ecor{\end{corollary}}
\def\bprop{\begin{proposition}}
\def\eprop{\end{proposition}}
\def\blem{\begin{lemma}}
\def\elem{\end{lemma}}

\def\brem{\begin{remark}}
\def\erem{\end{remark}}
\def\prf{\noindent{\bf Proof~: }}
\def\bdes{\begin{description}}
\def\edes{\end{description}}

\def\beq{\begin{equation}}
\def\eeq{\end{equation}}

\def\ben{\begin{enumerate}}
\def\een{\end{enumerate}}

\def\beqar{\begin{eqnarray}}
\def\eeqar{\end{eqnarray}}
\def\beqarr{\begin{eqnarray*}}
\def\eeqarr{\end{eqnarray*}}


\def\RR{{\mathbb R}}  

\def\CC{{\mathbb C}}
\def\DD{{\mathbb D}}

\def\EE{{\mathbb E}}
\def\PP{{\mathbb P}}
\def\QQ{{\mathbb Q}}
\def\cA{\mathcal{A}} \def\cB{\mathcal{B}} 
 \def\cE{\mathcal{E}} \def\cF{\mathcal{F}}

\def\cP{\mathcal{P}}



\def\P{{\mathsf P}} 


\def\ZZ{{\mathbb Z}}       
\def\NN{{\mathbb N}}       





\def\p{\varphi}

\def\part{\partial}

\def\d#1dt{\frac{d#1}{dt}}    



\begin{document}
\title{Stochastic flows on metric graphs}

\maketitle
\begin{center}
\renewcommand{\thefootnote}{(\arabic{footnote})}
  \scshape Hatem Hajri\footnote{Universit\'e du Luxembourg, Email: Hatem.Hajri@uni.lu\newline Research supported by the National Research Fund, Luxembourg, and cofunded under the Marie Curie Actions of the European Comission (FP7-COFUND).}
  and Olivier Raimond\footnote{Universit\'e Paris Ouest Nanterre La D\'efense, Email: oraimond@u-paris10.fr}
\renewcommand{\thefootnote}{\arabic{footnote}}\setcounter{footnote}{0}
\end{center}

\section*{Abstract}
We study  a simple stochastic differential equation (SDE) driven by one Brownian motion on a general oriented metric graph whose solutions are stochastic flows of kernels. Under some condition, we describe the laws of all solutions. This work is a natural continuation of \cite{MR2235172}, \cite{MR50101010} and \cite{MR50101111} where some particular metric graphs are considered. 
\section{Introduction}
A metric graph is seen as a metric space with branching points. In recent years, diffusion processes on metric graphs are more and more studied \cite{MR1743222},\cite{MR2905755},\cite{MR2905744},\cite{MR2905733},\cite{MR2905788}. They arise in many physical situations such as electrical networks, nerve impulsion propagation \cite{MR0043202}, \cite{MR0047702}. They also occur in limiting theorems for processes evolving in narrow tubes \cite{MR1399081}. Diffusion processes on graphs are defined in terms of their infinitesimal operators in \cite{MR1743769}. Such processes can be described as mixtures of motions ''along an edge'' and ''around a vertex''.
A typical example of such processes is Walsh Brownian motion defined on a finite number of half lines which are glued together at a unique end point. This process has acquired a particular interest since it was proved by Tsirelson that it can not be a strong solution to any SDE driven by a standard Brownian motion, although it satisfies the martingale representation property with respect to some Brownian motion \cite{MR1022917}. In view of this, it is natural to investigate SDEs on graphs driven by one Brownian motion to be as simple as possible. This study has been initiated by Freidlin and Sheu in \cite{MR1743769} where Walsh Brownian motion has been shown to satisfy the equation
$$df(X_t)=f'(X_t)dW_t+\frac{1}{2} f''(X_t)dt$$
where $W_t=|X_t|-L_t(|X|)$ is a Brownian motion, $f$ runs over an appropriate domain of functions with an appropriate definition of its derivative. Our subject in this paper is to investigate the following extension on a general oriented metric graph: 
$$K_{s,t}f(x) = f(x) +  \int_s^t K_{s,u}f'(x) dW_u + \frac{1}{2}\int_s^t K_{s,u}f''(x)du$$
where $K$ is a stochastic flow of kernels as defined in \cite{MR2060298}, $W$ is a real white noise, $f$ runs over an appropriate domain and $f'$ is defined according to an arbitrary choice of coordinates on each edge. When $G$ is a star graph, this equation has been studied in \cite{MR50101010} and when $G$ consists of only two edges and two vertices the same equation has been considered in \cite{MR50101111}. In this paper, we extend these two studies (as well as \cite{MR2235172} where the associated graph is simply the real line) and classify the solutions on any oriented metric graph.\\ \\ 
The content of this paper is as follows. 

In Section 2, we introduce notations for any metric graph $G$ and then define the SDE $(E)$ driven by  a white noise $W$, with solutions of this SDE being stochastic flows of kernels on $G$. 
Thereafter, our main result is stated. Along an edge the motion of any solution only depends on $W$ and the orientation of the edge. The set of vertices of $G$ will be denoted $V$.
Around a vertex $v\in V$, the motion depends on a flow $\hat{K}^v$ on a star graph (associated to $v$) as constructed in \cite{MR50101010}. 

In Section 3, starting from $(\hat{K}^v)_{v\in V}$ respectively solutions to an SDE on a star graph associated to a vertex $v$, under the following additional (but natural) assumption : the family $\big(\vee_{v\in V} \cF_{s,t}^{\hat{K}^v};\;s\leq t\big)$ is independent on disjoint time intervals, we construct a stochastic flow of kernels $K$ solution of $(E)$ (where $\cF_{s,t}^{\hat{K}^v}$ is the sigma-field generated by the increments of $\hat{K}^v$ between $s$ and $t$).

In Section 4, starting from $K$, we recover the flows $(\hat{K}^v)_{v\in V}$.  Actually, in sections 3 and 4, we prove more general results : the SDEs may be driven by different white noises on different edges of $G$.

The main results about flows on star graphs obtained in \cite{MR50101010} are reviewed in Section 5. Thus, as soon as the flows $(\hat{K}^v)_{v\in V}$ can be defined jointly, we have a general construction of a solution $K$ of $(E)$.

In Section 6, we consider two vertices $v_1$ and $v_2$ and under some condition only depending on the ''geometry'' of the star graphs associated to $v_1$ and $v_2$ we show that independence on disjoint time intervals of $\big({\cF}^{\hat{K}^{v_1}}_{s,t}\vee {\cF}^{\hat{K}^{v_2}}_{s,t}, s\le t\big)$ is equivalent to : $\hat{K}^{v_1}$ and $\hat{K}^{v_2}$ are independent given $W$.

Section 7 is an appendix devoted to the skew Brownian flow constructed by Burdzy and Kaspi in \cite{MR2094439}. We will explain how this flow simplifies our construction on graphs such that any vertex has at most two adjacent edges. 

Section 8 is an appendix complement to Section 5, we will review the construction of flows  $\hat{K}^v$ constructed in \cite{MR50101010} with notations in accordance with the content of our paper.

\section{Definitions and main results}
\subsection{Oriented metric graphs}\label{mlk}

\begin{figure}[!htb]
\centering
\includegraphics[height=5cm,width=7cm]{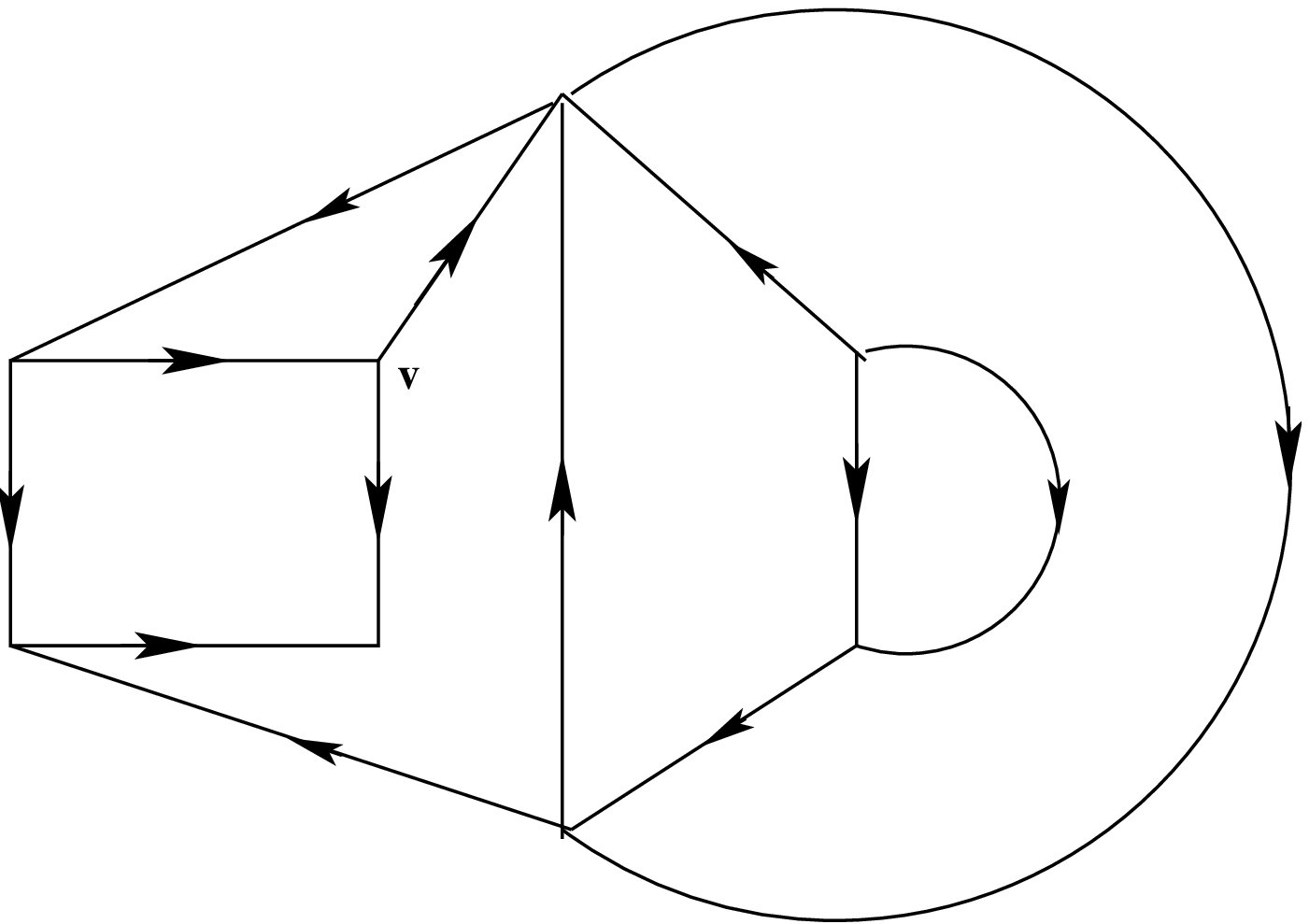}
\caption{An example of oriented metric graph.}
\label{fig:1}
\end{figure}

Let $G$ be a metric graph in the sense that $(G,d)$ is a connected metric space for which there exists a finite or countable set $V$, the set of vertices, and a partition $\{E_i;\;i\in I\}$ of $G\backslash V$ with $I$ a finite or countable set (i.e. $G\backslash V=\cup_{i\in I} E_i$  and for $i\neq j$, $E_i\cap E_j=\emptyset$) such that for all $i\in I$,  $E_i$ is isometric to an interval $(0,L_i)$, with $L_i\le +\infty$. We call $E_i$ an edge, $L_i$ its length and denote by $\{E_i, i\in I\}$ the set of all edges on $G$.\\
To each edge $E_i$, we associate an isometry $e_i:J_i\to \bar{E}_i$, with $J_i=[0,L_i]$ when $L_i<\infty$ and $J_i=[0,\infty)$ or $J_i=(-\infty,0]$ when $L_i=\infty$. 
Note that $e_i(t)\in E_i$ for all $t$ in the interior of $J_i$, $e_i(0)\in V$ and when $L_i<\infty$, $e_i(L_i)\in V$.
The mapping $e_i$ will be called the orientation of the edge $E_i$ and the family $\cE=\{e_i;\;i\in I\}$ defines the orientation of $G$. 
When $L_i<\infty$, denote $\{g_i,d_i\}=\{e_i(0),e_i(L_i)\}$. When $L_i=\infty$, denote $\{g_i,d_i\}=\{e_i(0),\infty\}$ when $J_i=[0,\infty)$ and $\{g_i,d_i\}=\{\infty,e_i(0)\}$ when  $J_i=(-\infty,0]$. 
For all $v\in V$, denote $I^+_v=\{i\in I;\; g_i=v\}$, $I^-_v=\{i\in I;\; d_i=v\}$ and $I_v=I^+_v\cup I^-_v$. 
Let $n_v$, $n^+_v$ and $n^-_v$ denote respectively the numbers of elements in $I_v$, $I_v^+$ and $I_v^-$. Then $n_v=n_v^++n_v^-$.

\smallskip
We will always assume that
\begin{itemize}
\item  $n_v<\infty$ for all $v\in V$ (i.e. $I_v$ is a finite set).
\item  $\inf_i L_i=L>0$. 
\end{itemize}

A graph with only one vertex and such that $L_i=\infty$ for all $i\in I$ will be called a star graph.
It will also be convenient to imbed any star graph in the complex plane $\CC$. Its unique vertex will be denoted  $0$.\\

For each $v\in V$, denote $G_v=\{v\}\cup\cup_{i\in I_v}E_i$ and $G^L_v=\{x\in G;\;d(x,v)<L\}$, which is then a subset of $G_v$. 
Note that $G_v\cap V=\{v\}$. For each $v\in V$, there exists a star graph $\hat{G}_v$ and a mapping $i_v:G_v\to \hat{G}_v$ such that $i_v:G_v\to i_v(G_v)$ is an isometry.
This implies in particular that $i_v(v)=0$ and that $\hat{G}^L_v=\{x\in \hat{G}_v;\;d(0,v)<L\}=i_v({G}^L_v)$.
For each $i\in I_v$, define $\hat{e}^v_i=i_v\circ e_i$. 
Note that $\hat{G}_v$ can be written in the form $\{0\}\cup \cup_{i\in I_v}\hat{E}^v_i$, with $i_v(E_i)\subset\hat{E}^v_i$ and where $(\hat{E}^v_i)_{i\in I_v}$ is the set of edges of $\hat{G}^v$. The mapping $\hat{e}^v_i$ can be extended to an isometry $(-\infty,0]\to \{0\}\cup \hat{E}^v_i$ when $i\in I_v^-$ and to an isometry $[0,+\infty) \to \{0\}\cup \hat{E}^v_i$ when $i\in I_v^+$.

\begin{figure}[!htb]
\centering
\includegraphics[height=4cm,width=4cm]{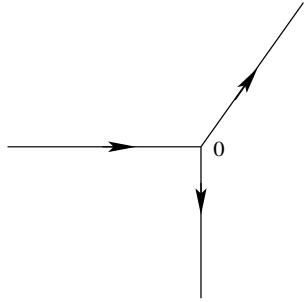}
\caption{The star graph $\hat{G}_v$ associated to $v$ in Figure 1.}
\end{figure}
For $x\in G_v$ and $f:G_v\to\RR$, we will sometimes denote $\hat{x}_v=i_v(x)$ and $\hat{f}_v:\hat{G}_v\to\RR$ the mapping defined by $\hat{f}=0$ on $i_v(G_v)^c$ and $\hat{f}_v=f\circ i_v^{-1}$ on $i_v(G_v)$, so that $\hat{f}_v(\hat{x}_v)=f(x)$ for all $x\in G_v$.

\smallskip
We will also denote by $\cB(G)$ the set of Borel sets of $G$ and by $\cP(G)$ the set of Borel probability measures on $G$. Note that a kernel on $G$ is a measurable mapping $k:G\to\cP(G)$. For $x\in G$ and $A\in \cB(G)$, $k(x,A)$ denotes $k(x)(A)$ and the probability measure $k(x)$ will sometimes be denoted $k(x,dy)$. For $f$ a bounded measurable mapping on $G$, $kf(x)$ denotes $\int f(y) k(x,dy)$.
\subsection{SDE on $G$}
Let $G$ be an oriented metric graph.
To each $v\in V$ and $i\in I_v$, we associate a transmission parameter $\alpha^i_v$ such that $\sum_{i\in I_v} \alpha^i_v=1$. Denote $\alpha=(\alpha^i_v;\; v\in V, \;i\in I_v)$. 
Define ${\mathcal D}^G_\alpha$ the set of all continuous functions $f:G\rightarrow\mathbb{R}$ such that for all $i\in I$, $f\circ e_i$ is $C^2$ on the interior of $J_i$ with bounded first and second derivatives both extendable by continuity to $J_i$ and such that for all $v\in V$
$$\sum_{i\in I_v^+} \alpha^i_v \lim_{r\rightarrow 0+}(f\circ e_i)'(r)=\sum_{i\in I_v^-} \alpha^i_v \lim_{r\rightarrow 0-}(f\circ e_i)'(r).$$
Since $\alpha$ will be fixed, ${\mathcal D}^G_\alpha$ will simply be denoted ${\mathcal D}$.
When $\hat{G}_v$ is a star graph as defined before, to the half line $\hat{E}^v_i$, we associate the parameter $\alpha_v^i$ and denote $\alpha_v=(\alpha_v^i;\; i\in I_v)$ and $\hat{\mathcal D}_v=\mathcal{ D}^{\hat{G}_v}_{\alpha_v}$.
For $f\in\mathcal D$ and $x=e_i(r)\in G\backslash V$, set $f'(x)=(f\circ e_i)'(r)$, $f''(x)=(f\circ e_i)''(r)$ and take the convention $f'(v)=f''(v)=0$ for all $v\in V$.

\begin{definition} \label{defsfk} 
A stochastic flow of kernels (SFK) $K$ on $G$, defined on a probability space $(\Omega,\cA,\PP)$, is a family $(K_{s,t})_{s\le t}$ such that
\begin{enumerate}
\item For all $s\le t$, $K_{s,t}$ is a measurable mapping from $(G\times\Omega,\cB(G)\otimes\cA)$ to $(\cP(G),\cB(\cP(G)))$;
\item For all $h\in\RR$, $s\le t$, $K_{s+h,t+h}$ is distributed like $K_{s,t}$;
\item For all $s_1\le t_1\le\cdots \le s_n\le t_n$, the family $\{K_{s_i,t_i}, 1\le i\le n\}$ is independent.
\item For all $s\le t\le u$ and all $x\in G$, a.s. $K_{s,u}(x)=K_{s,t}K_{t,u}(x)$, and $K_{s,s}$ equals the identity;
\item For all $f\in C_0(G)$, and $s\le t$, we have
$$\lim_{(u,v)\to (s,t)}\sup_{x\in G}\EE[(K_{u,v}f(x)-K_{s,t}f(x))^2]=0;$$
\item For all $f\in C_0(G)$, $x\in G$, $s\le t$, we have
$$\lim_{y\to x}\EE[(K_{s,t}f(y)-K_{s,t}f(x))^2]=0;$$
\item For all $s\le t$, $f\in C_0(G)$, $\lim_{|x|\to\infty}\EE[(K_{s,t}f(x))^2]=0$.
\end{enumerate}
\end{definition}
We say that $\varphi$ is a stochastic flow of mappings (SFM) on $G$ if $K_{s,t}(x)=\delta_{\p_{s,t}(x)}$ is a SFK on $G$.

Given two SFK's $K^1$ and $K^2$ on $G$, we say that $K^1$ is a modification of $K^2$ if for all $s\le t$, $x\in G$, a.s. $K^1_{s,t}(x)=K^2_{s,t}(x)$.

For a family of random variables $Z=(Z_{s,t})_{s\le t}$, 
denote $\mathcal F^Z_{s,t}=\sigma(Z_{u,v}, s\leq u\leq v\leq t)$.

\begin{definition}(Real white noise)
A family $(W_{s,t})_{s\le t}$ is called a real white noise if there exists a Brownian motion on the real line  $(W_t)_{t\in\RR}$, that is $(W_t)_{t\geq 0}$ and $(W_{-t})_{t\geq 0}$ are two independent standard Brownian motions such that for all $s\le t$, $W_{s,t}=W_t-W_s$ (in particular, when $t\ge 0$, $W_t=W_{0,t}$ and $W_{-t}=-W_{-t,0}$). 
\end{definition}

Our main interest in this paper is the following SDE, that extends Tanaka's SDE to metric graphs.
\begin{definition}(Equation $(E_\alpha^G)$)
On a probability space $(\Omega,\mathcal A,\PP)$, let $W$ be a real white noise and $K$ be a stochastic flow of kernels on $G$. We say that $(K,W)$ solves $(E_\alpha^G)$ if for all $s\leq t$, $f\in \mathcal D$ and $x\in G$, a.s.
$$K_{s,t}f(x)=f(x)+\int_s^tK_{s,u}f'(x)W(du) + \frac{1}{2}\int_s^tK_{s,u}f''(x)du.$$
When $\varphi$ is a SFM and $K=\delta_\varphi$ is a solution of $(E)$, we simply say that $(\varphi,W)$ solves $(E_\alpha^G)$.
\end{definition}

Since $G$ and $\alpha$ will be fixed from now on, we will denote equation $(E_\alpha^G)$ simply by $(E)$, and we will also denote $(E^{\hat{G}_v}_{\alpha_v})$ simply be $(\hat{E}^{v})$. A complete classification of solutions to $(\hat{E}^{v})$ has been given in \cite{MR50101010}.\\

\medskip
A family of $\sigma$-fields $(\cF_{s,t};\;s\le t)$ will be said {\em independent on disjoint time intervals (abbreviated : i.d.i)} as soon as for all $(s_i,t_i)_{1\le i\le n}$ with $s_i\le t_i\le s_{i+1}$, the $\sigma$-fields $(\cF_{s_i,t_i})_{1\le i\le n}$ are independent.
Note that for $K$ a SFK, since the increments of $K$ are independent, then $(\cF^K_{s,t};\;s\le t)$ is i.d.i.

Our main result is the following

\begin{theorem}\label{klo}
{\bf\em (i)} Let $W$ be a real white noise and let $(\hat{K}^v)_{v\in V}$ be a family of  SFK's respectively on $\hat{G}_v$.
Assume that for each $v\in V$, $(\hat{K}^v,W)$ is a solution of $(\hat{E}^{v})$ and that $\big(\hat{\cF}_{s,t}:=\vee_{v\in V} \cF_{s,t}^{\hat{K}^v};\;s\leq t\big)$ is independent on disjoint time intervals. Then there exists a unique (up to modification) SFK $K$ on $G$ such that 
\begin{itemize}
\item $\mathcal F^K_{s,t}\subset\hat{\cF}_{s,t}$ for all $s\leq t$, 
\item $(K,W)$ is a solution to $(E)$ and 
\item For all $s\in\RR$ and $x\in G_v$, setting
\begin{equation}\label{rho} \rho^{x,v}_s=\inf\{u\geq s : \; K_{s,u}(x,G_v)<1\},\end{equation} 
then for all $t>s$, a.s. on the event $\{t<\rho^{x,v}_s\}$,
\begin{equation}\label{pln}
i_v * K_{s,t}(x) = \hat{K}^v_{s,t}(\hat{x}^v) .
\end{equation}
\end{itemize}

{\em\bf(ii)} Let $(K,W)$ be a solution of $(E)$. Then for each $v\in V$, there exists a unique (up to modification) SFK $\hat{K}^{v}$ on $\hat{G}_v$ such that 
\begin{itemize}
\item for all $s\le t$, $\hat{\cF}_{s,t}:=\vee_{v\in V} \cF_{s,t}^{\hat{K}^v}\subset{\cF}_{s,t}^K$,
\item $(\hat{K}^v,W)$ is a solution of $(\hat{E}^{v})$ for each $v\in V$
\end{itemize}
and such that if $\rho^{x,v}_s$ is defined by (\ref{rho}) then for all $s<t$ in $\RR$ and $x\in G_v$, a.s. on the event $\{t<\rho^{x,v}_s\}$, (\ref{pln}) holds. 
\end{theorem}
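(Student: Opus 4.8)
The plan is to establish the two directions as mutually inverse constructions, both resting on a single localization principle: in the interior of an edge any solution moves by a Tanaka-type motion driven only by $W$ and the orientation of the edge, while near a vertex $v$ it is governed by the star-graph flow $\hat{K}^v$. The role of $\inf_i L_i = L > 0$ is that passing from one vertex to an adjacent one requires crossing an entire edge of length at least $L$, so only finitely many vertex crossings occur on any bounded time interval; this is the renewal structure that makes the concatenation below converge. The exit times $\rho^{x,v}_s$ of (\ref{rho}) mark precisely the instants at which mass first reaches a neighboring vertex and the governing star-graph flow must be switched.

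For part \textbf{(i)} I would first use (\ref{pln}) to \emph{define} $K_{s,t}(x)$ on the event $\{t < \rho^{x,v}_s\}$, namely as the pushforward of $\hat{K}^v_{s,t}(\hat{x}^v)$ under $i_v^{-1}$, and then extend to all $t$ by the kernel cocycle $K_{s,t}(x) = \int_{\bar{G}_v} K_{\rho,t}(y)\, K_{s,\rho}(x, dy)$ applied at $\rho = \rho^{x,v}_s$: the mass $K_{s,\rho}(x,\cdot)$ is carried by $\bar{G}_v$, and from a point $y$ sitting at a vertex $v'$ one continues with $\hat{K}^{v'}$. Iterating this produces a candidate $K$; since $L > 0$ there are finitely many renewals on bounded intervals, so the scheme is well defined and measurable. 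The substantive work is then to verify the seven axioms of Definition \ref{defsfk}: stationarity and the continuity conditions (5)--(7) are inherited edge-by-edge from the $\hat{K}^v$, while independence of increments (3) together with $\cF^K_{s,t} \subset \hat{\cF}_{s,t}$ is exactly where the i.d.i.\ hypothesis on $(\hat{\cF}_{s,t})$ enters, making the successively concatenated pieces, which live on disjoint time intervals, independent. To check that $(K,W)$ solves $(E)$ I would fix $f \in \mathcal{D}$, localize to $G_v$ where $\hat{f}_v \in \hat{\mathcal{D}}_v$ and $\hat{f}_v', \hat{f}_v''$ match $f', f''$ under $i_v$, apply $(\hat{E}^v)$ for $\hat{K}^v$ before $\rho^{x,v}_s$, and patch across exit times using the cocycle and the edge dynamics. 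Uniqueness follows because (\ref{pln}), the cocycle relation, and the constraint $\cF^K_{s,t} \subset \hat{\cF}_{s,t}$ together pin down $K_{s,t}(x)$ for every $s \le t$ and $x$.

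For part \textbf{(ii)} I would run the construction in reverse. Given $(K,W)$, follow the mass of $K$ only until it exits $G_v$ and transport this stopped local flow through $i_v$ onto $i_v(G_v)$, a star graph with the original (possibly finite) edge lengths sitting inside the infinite star $\hat{G}_v$. This local data fixes the behaviour of the flow in a neighborhood of the vertex $0$; beyond it each half-line carries only the Tanaka motion driven by $W$. By the classification of solutions of $(\hat{E}^v)$ reviewed in Section 5, there is a unique SFK $\hat{K}^v$ on $\hat{G}_v$ solving $(\hat{E}^v)$ whose local data near $0$ is the transported one, and this $\hat{K}^v$ satisfies (\ref{pln}) by construction. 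The filtration inclusion $\hat{\cF}_{s,t} \subset \cF^K_{s,t}$ holds because each $\hat{K}^v$ is read off measurably from $K$ and $W$, and uniqueness again comes from the classification.

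The main obstacle is the concatenation argument in part (i): proving that gluing the local flows at the random exit times $\rho^{x,v}_s$ yields an object satisfying the full kernel cocycle of Definition \ref{defsfk}(4) (not merely for one-point motions) and genuinely independent increments. The difficulty is that $K_{s,\rho}(x,\cdot)$ is spread over $\bar{G}_v$ and may simultaneously place mass near several neighboring vertices, so one must control how the cocycle interacts with this spreading and continue each fragment by the correct $\hat{K}^{v'}$; the i.d.i.\ hypothesis is then used to upgrade independence of the star-graph increments to independence of the glued increments. The identification step in part (ii) is the second delicate point, resting essentially on the classification of Section 5, which guarantees that the local data near a vertex admits a unique extension solving $(\hat{E}^v)$.
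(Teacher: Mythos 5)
Your plan for part (i) hinges on a step the axioms do not license: extending $K$ by applying the cocycle at the \emph{random} time $\rho^{x,v}_s$ and then invoking the i.d.i.\ hypothesis for the successive pieces. Both the flow property in Definition \ref{defsfk}(4) and the i.d.i.\ property of $(\hat{\cF}_{s,t})$ are statements about \emph{deterministic} times and deterministic disjoint intervals; a general SFK has no strong flow property at stopping times (the paper proves such a property only for the special Burdzy--Kaspi flow, Proposition \ref{gn}, and only as an appendix simplification for graphs with vertices of degree at most two). Moreover at time $\rho^{x,v}_s$ the mass $K_{s,\rho}(x,\cdot)$ is spread over $G_v$, not sitting at a vertex, so ``continue from $y$ at $v'$ with $\hat{K}^{v'}$'' does not even describe the continuation correctly. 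You flag this as the main obstacle but offer no mechanism to resolve it, and this is precisely where the paper's proof does something different: it introduces the event $A_{s,t}$ of (\ref{A}), on which every driving increment oscillates by less than $L=\inf_i L_i$, so that over a short interval each trajectory can meet \emph{at most one} vertex; on $A_{s,t}$ a global local kernel $K^0_{s,t}$ is defined outright (pure translation $\delta_{e_i(r+W^i_{s,t})}$ before $\tau^x_s$, and $i_v^{-1}*\hat{K}^v_{s,t}(\hat{x}^v)$ after), and the full flow is obtained by composing $K^0$ along \emph{deterministic dyadic} grids of mesh $2^{-n}$ with $n=n_{s,t}$ random but the partition points deterministic. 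The i.d.i.\ hypothesis then applies verbatim to the dyadic subintervals, yielding independence, the Feller property of the compositions, the SDE (by induction along the grid), and (\ref{pln}) on $\{t<\rho^{x,v}_s\}$; no surgery at random times is ever performed. Your stationarity claim also needs this grid structure: the paper gets stationarity first for dyadic $s<t$ and extends by $L^2$-continuity, rather than ``edge-by-edge inheritance.''

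For part (ii) your route through the Section 5 classification does not deliver what the theorem asserts. The classification of solutions of $(\hat{E}^v)$ is a statement about \emph{laws}; the theorem requires an SFK $\hat{K}^v$ realized on the \emph{same} probability space with $\cF^{\hat{K}^v}_{s,t}\subset\cF^K_{s,t}$, jointly for all $v$, and unique \emph{up to modification} --- law-uniqueness gives neither the existence of such a measurable functional of $K$ nor pathwise uniqueness, and nothing in Section 5 says a stopped local flow on a sub-star with finite edges extends uniquely on a given space. The paper's Section 4 proof is classification-free and mirrors Section 3: it sets $\hat{K}^{0,v}_{s,t}(\hat{x})=i_v*K_{s,t}(e_i(r))$ on $A_{s,t}$ when the driving path reaches $v$ before $t$ (the oscillation bound guaranteeing $i_v*K_{s,t}$ is a genuine kernel on $\hat{G}_v$), sets $\hat{K}^{0,v}_{s,t}(\hat{x})=\delta_{\hat{e}^v_i(r+W^i_{s,t})}$ otherwise, and glues along the same dyadic grids; uniqueness follows by comparing any candidate with $\hat{K}^{0,v}$ on $A_{s,t}$ and composing, exactly as in the uniqueness argument of Theorem \ref{hatktok}. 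So the decisive missing idea in your proposal, for both directions, is the pair (event $A_{s,t}$, deterministic dyadic concatenation), which is what makes the localization rigorous without any strong flow property.
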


Note that \eqref{pln} can be rewritten: 
for all bounded measurable function $f$ on $G$, and all $x\in G_v$
$$K_{s,t}f(x)=\hat{K}_{s,t}\hat{f}^v (\hat{x}^v).$$

Theorem \ref{klo} reduces the construction of solutions to $(E)$ to the construction of solutions to $(\hat{E}^v)$. 
Since for all $v$ all solutions to $(\hat{E}^v)$ are described in \cite{MR50101010}, to complete the construction of all solutions to $(E)$, one has to be able to construct them jointly.

This Theorem implies that there is a unique $\sigma(W)$-measurable flow solving $(E)$.  We also notice that under the assumption $\big(\hat{\cF}_{s,t}; s\leq t\big)$ is i.d.i it is possible to construct different (in law) flows of mappings solving $(E)$. However, assuming that solutions to $(\hat{E}^v)$ are independent given $W$ the associated flow of mappings solution to $(E)$ is law-unique. This applies also to all other solutions.\\
For each $v\in V$, let $\alpha^+_v=\sum_{i\in I_v^+} \alpha_v^i$ and $\beta_v=2\alpha^+_v-1$. Under some condition linking $\beta_{v_1}$ and $\beta_{v_2}$, the next proposition offers a better understanding of : $({\cF}^{\hat{K}^{v_1}}_{s,t}\vee {\cF}^{\hat{K}^{v_2}}_{s,t})_{s\le t}$ is i.d.i. 
\begin{proposition}\label{condi} Let $v_1$ and $v_2$ be two vertices in $V$ such that $\beta_{v_2}\neq \beta_{v_1}$ and 
$$|\beta_{v_2}-\beta_{v_1}|\ge 2\beta_{v_1}\beta_{v_2}.$$
Let $W$ be a real white noise. Let $\hat{K}^{v_1}$ and $\hat{K}^{v_2}$ be SFKs respectively on $\hat{G}^{v_1}$ and on $\hat{G}^{v_2}$ such that $(\hat{K}^{v_1},W)$ and $(\hat{K}^{v_2},W)$ are solutions respectively to $(\hat{E}^{v_1})$ and to  $(\hat{E}^{v_2})$. 
Then $({\cF}^{\hat{K}^{v_1}}_{s,t}\vee {\cF}^{\hat{K}^{v_2}}_{s,t})_{s\le t}$ is i.d.i if and only if 
 $\hat{K}^{v_1}$ and $\hat{K}^{v_2}$ are independent given $W$. 

\end{proposition}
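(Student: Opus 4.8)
The plan is to prove the two implications separately and to use the hypothesis $|\beta_{v_2}-\beta_{v_1}|\ge 2\beta_{v_1}\beta_{v_2}$ only for the converse. The implication ``$\hat{K}^{v_1},\hat{K}^{v_2}$ independent given $W$ $\Rightarrow$ i.d.i.'' holds with no restriction on the skewnesses. The key inputs are that each $(\cF^{\hat{K}^{v_j}}_{s,t})_{s\le t}$ is not only i.d.i. but, since $(\hat{K}^{v_j},W)$ solves $(\hat{E}^{v_j})$, is conditionally i.d.i. given $W$ (the noise of the flow is a continuous product over disjoint time intervals, and this factorisation is preserved after conditioning on the sub-noise generated by $W$), and that $\cF^{\hat{K}^{v_j}}_{s,t}$ is, conditionally on $W$, carried by the increments of $W$ on $[s,t]$. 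Fixing disjoint $[s_i,t_i]$ and bounded $\Phi_i$ measurable for $\cF^{\hat{K}^{v_1}}_{s_i,t_i}\vee\cF^{\hat{K}^{v_2}}_{s_i,t_i}$, I would condition on $W$, factorise $\EE[\prod_i\Phi_i\mid W]=\prod_i\EE[\Phi_i\mid W]$ using conditional independence of the two flows together with the conditional i.d.i. of each, observe that each factor depends only on the $W$-increments over $[s_i,t_i]$, and conclude by the independence of those increments.

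For the converse I would first use the classification of solutions of $(\hat{E}^{v_j})$ recalled in Section 5 to describe the part of the noise of $\hat{K}^{v_j}$ not generated by $W$. Attach to $\hat{K}^{v_j}$ its signed one-point motion $Y^j$, a $\beta_{v_j}$-skew Brownian motion driven by $W$, together with its symmetric local time $L^j$ at the vertex. The extra randomness of $\hat{K}^{v_j}$ (the choice, during each excursion, of the edge and, in the kernel case, of the way mass is split) is created exactly on the support of $dL^j$, i.e. on the visits of $Y^j$ to $0$. Conditional independence of $\hat{K}^{v_1}$ and $\hat{K}^{v_2}$ given $W$ is therefore equivalent to the independence, given $W$, of these two families of excursion decisions.

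Assume now that $(\cF^{\hat{K}^{v_1}}_{s,t}\vee\cF^{\hat{K}^{v_2}}_{s,t})_{s\le t}$ is i.d.i. Because decisions of the two flows made at distinct times lie in $\hat{\cF}$ over disjoint (arbitrarily small) time intervals, i.d.i. forces them to be independent; hence any dependence between the two families can only be carried by decisions made simultaneously, that is by the set $\{t:\,Y^1_t=Y^2_t=0\}$ and, quantitatively, by the local time of the planar process $(Y^1,Y^2)$ at the corner $(0,0)$. The crucial geometric fact is that $Y^1$ and $Y^2$ are driven by the \emph{same} $W$, so off the two axes $dY^1=dY^2=dW$ and $(Y^1,Y^2)$ is confined to the $45^{\circ}$ lines $\{Y^1-Y^2=\text{const}\}$, changing line only through the skew pushes $\beta_{v_1}\,dL^1$ on $\{Y^1=0\}$ and $-\beta_{v_2}\,dL^2$ on $\{Y^2=0\}$. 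I would analyse this degenerate skew diffusion near the corner, through harmonic functions that off the axes are affine along each diagonal and are matched across the axes by the two skew conditions, and compute the local time at $(0,0)$. The product $\beta_{v_1}\beta_{v_2}$ enters when the two axis conditions are composed, and the threshold for the corner local time to vanish comes out to be exactly $|\beta_{v_2}-\beta_{v_1}|\ge 2\beta_{v_1}\beta_{v_2}$ (with $\beta_{v_1}\neq\beta_{v_2}$ needed to prevent $Y^1\equiv Y^2$). When the corner carries no local time the two decision families live on disjoint time sets and i.d.i. upgrades to conditional independence given $W$, which is the desired conclusion.

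The main obstacle is this corner analysis together with the passage from ``decisions on a null time set'' to genuine conditional independence. The noise of a Walsh/skew flow is non-classical (Tsirelson), so I cannot encode the extra randomness by a single martingale orthogonal to $W$ and reduce everything to a bracket; instead I must argue at the level of the continuous-product structure, using the classification of Section 5 to realise the excursion decisions explicitly and a measurable-selection/approximation argument to justify that simultaneity of decisions is measured precisely by the corner local time of $(Y^1,Y^2)$. Pinning down the exact constant in $|\beta_{v_2}-\beta_{v_1}|=2\beta_{v_1}\beta_{v_2}$, and verifying that above it the corner is polar for the local-time clock while below it the corner is charged (so that correlated i.d.i. couplings genuinely exist, establishing sharpness), is where the real work lies.
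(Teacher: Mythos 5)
Your skeleton coincides with the paper's: the easy direction by conditioning on $W$ and factorising, and the hard direction by showing the ``decision times'' of the two flows are a.s.\ disjoint and then upgrading i.d.i.\ to conditional independence given $W$. But the two pivotal steps are exactly the ones you defer, and the first is aimed at the wrong invariant. What the argument needs is that two skew Brownian motions $X,Y$ driven by the \emph{same} $W$, with parameters $\beta_{v_1}\neq\beta_{v_2}$ and $|\beta_{v_2}-\beta_{v_1}|\ge 2\beta_{v_1}\beta_{v_2}$, started from $x\neq y$ or $x=y=0$, satisfy $X_t\neq Y_t$ for \emph{all} $t>0$ a.s.; in particular the corner is polar, so the last zeros $g^1_j,g^2_{j'}$ before $t$ are a.s.\ pairwise distinct. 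Your criterion ``the corner carries no local time'' is strictly weaker than polarity: the pair $(Y^1,Y^2)$ could visit $(0,0)$ on a set of zero local time, and a single common last zero suffices to synchronise the edge choices of the two flows and destroy the conclusion. The paper does not redo any corner analysis: its Lemma \ref{sde} settles $\beta_{v_1}\le 0\le \beta_{v_2}$ directly, quotes Theorem 1.4 (i)-(ii) of \cite{MR1880238} (Burdzy--Chen) for the remaining cases, and treats $x\neq y$ by the strong Markov property at the first time $T$ with $X_T=Y_T=0$. Your proposed harmonic-function computation would amount to reproving Burdzy--Chen, which you explicitly do not carry out; so the threshold $|\beta_{v_2}-\beta_{v_1}|\ge 2\beta_{v_1}\beta_{v_2}$ is, in your write-up, asserted rather than derived.

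The second gap is the passage from ``disjoint decision times'' to conditional independence given $W$, which you cover only by an appeal to ``measurable selection/approximation''. The paper's mechanism is concrete: fix finitely many starting points $x^i_j$, condition on the $\sigma(W)$-events $A_{J^1,J^2}$ recording which points have reached the vertex by time $t$, partition further by the events $B_{\cJ}$ ordering the last zeros, and interleave fixed dyadic times $u_1<\cdots<u_{m-1}$ between consecutive blocks. On the resulting event $C$ one proves two measurability claims: kernels whose last zero exceeds $u=u_{m-1}$ are $\cF_{u,t}\vee\cF^W_{0,u}$-measurable, via the $\cF^W_{0,u}$-measurable relocation point $X^{i,j}_u$ and the identity $\hat{K}^i_{0,t}(x^i_j)=\hat{K}^i_{u,t}(X^{i,j}_u)$, while the remaining kernels are $\cF_{0,u}\vee\cF^W_{u,t}$-measurable since their one-point motions avoid $0$ on $[u,t]$; i.d.i.\ of the joint filtration then makes these two $\sigma$-fields independent given $W$, and an induction over the blocks concludes. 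Without this (or an equivalent) device, your ``i.d.i.\ forces decisions at distinct times to be independent'' does not follow, because a decision is not measurable over an arbitrarily small interval around its time --- it is revealed along the whole excursion. Finally, your sharpness claim below the threshold is neither part of the proposition nor proved in the paper. As it stands, then, the proposal is a programme consistent with the paper's strategy, with both hard steps --- Lemma \ref{sde} (outsourced by the paper to \cite{MR1880238}) and the dyadic time-splitting induction --- left unexecuted.
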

When $V=\{v_1,v_2\}$ with $\hat{G}^{v_1}$ and $\hat{G}^{v_2}$ being given by the following star graphs 

\begin{figure}[!htb]
\centering
\includegraphics[scale=.4]{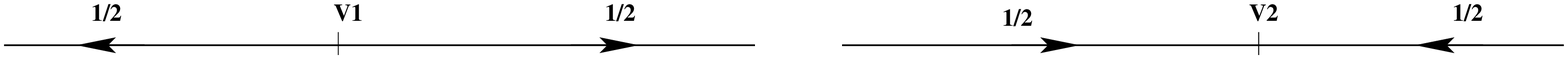}
\caption{$\hat{G}^{v_1}$ and $\hat{G}^{v_2}$.}
\end{figure}

\noindent this proposition has been proved in \cite{MR50101111}.

\section{Construction of a solution of $(E)$ out of solutions of $(\hat{E}^v)$}

For all $i\in I$, let $W^i$ be a real white noise.
Assume that $\mathcal{W}:=(W^i_{s,t};\; i\in I,\, s\le t)$ is Gaussian. 
Let 
\begin{equation}\label{A}
A_{s,t}:=\{\sup_{i\in I}\sup_{s<u<v<t}|W^i_{u,v}|<L\}.
\end{equation}
Assume that $\lim_{|t-s|\to 0} \PP(A_{s,t}^c)=0$. Note that this assumption is satisfied if $W^i=W$ for all $i$, or if $I$ is finite.

Let $\hat{K}=(\hat{K}^v)_{v\in V}$ be a family of SFK's respectively on $\hat{G}_v$ and let $\mathcal{W}^v:=(W^i;\;i\in I_v)$. Assume that $(\hat{K}^v,\mathcal{W}^v)$ is a solution to the following SDE: For all $s\leq t$, $\hat{f}\in \hat{\mathcal D}_v$, $\hat{x}\in \hat{G}_v$, a.s.
\begin{equation}\label{appro}
\hat{K}^v_{s,t}\hat{f}(\hat{x})=\hat{f}(\hat{x})+\sum_{i\in I_v} \int_s^t\hat{K}^v_{s,u}(1_{\hat{E}_i^v}\hat{f}')(\hat{x})W^i(du) + \frac{1}{2}\int_s^t\hat{K}^v_{s,u}\hat{f}''(\hat{x})du.
\end{equation}
\medskip
Then we have the following
\begin{lemma}\label{noise}
For all $v\in V$, $i\in I_v$ and all $s\le t$, we have $\mathcal F^{W^i}_{s,t}\subset\cF_{s,t}^{\hat{K}^v}$.
\end{lemma}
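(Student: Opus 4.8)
The plan is to recover the increments of $W^i$ from the flow $\hat{K}^v$ by isolating, through a well-chosen test function, the single martingale driven by $W^i$. Fix $v$, $i\in I_v$ and $s\le t$. Since $\cF^{W^i}_{s,t}$ is generated by the increments $W^i_{u,w}$ with $s\le u\le w\le t$, and these are additive, it suffices to prove that $W^i_{s,t}$ is $\cF^{\hat{K}^v}_{s,t}$-measurable for every $s\le t$.

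First I would choose a test function that kills all noises except $W^i$. Pick $\hat{f}\in\hat{\mathcal D}_v$ supported in the interior of the edge $\hat{E}^v_i$, away from the vertex $0$, with $0\le\hat{f}'\le 1$ and $\hat{f}'\equiv 1$ on a neighbourhood of a chosen point $\hat{x}\in\hat{E}^v_i$; the vertex condition defining $\hat{\mathcal D}_v$ is then automatic, all the one-sided derivatives at $0$ vanishing. With this choice $1_{\hat{E}^v_j}\hat{f}'=0$ for $j\neq i$, so \eqref{appro} reduces to $\hat{K}^v_{s,t}\hat{f}(\hat{x})=\hat{f}(\hat{x})+\int_s^t N_u\,W^i(du)+\tfrac12\int_s^t\hat{K}^v_{s,u}\hat{f}''(\hat{x})\,du$, where $N_u:=\hat{K}^v_{s,u}(1_{\hat{E}^v_i}\hat{f}')(\hat{x})$. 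Setting $M_u:=\hat{K}^v_{s,u}\hat{f}(\hat{x})-\tfrac12\int_s^u\hat{K}^v_{s,r}\hat{f}''(\hat{x})\,dr$, both $M$ and $N$ are continuous and adapted to $(\cF^{\hat{K}^v}_{s,u})_{u\ge s}$; $M$ is a continuous martingale in the filtration generated by $(\hat{K}^v,\mathcal{W}^v)$ with $dM_u=N_u\,W^i(du)$, and, being $\cF^{\hat{K}^v}$-adapted, it is a martingale for $\cF^{\hat{K}^v}$ as well; moreover $N_s=\hat{f}'(\hat{x})=1$.

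Next I would invert this relation. If $N$ stayed bounded away from $0$ one could simply write $W^i_{s,t}=\int_s^t N_u^{-1}\,dM_u$, a stochastic integral built entirely inside $\cF^{\hat{K}^v}$, hence $\cF^{\hat{K}^v}_{s,t}$-measurable. Since $\hat{f}'$ has compact support $N$ may vanish, so I localise: let $\sigma:=\inf\{u\ge s:\;N_u\le 1/2\}$, an $\cF^{\hat{K}^v}$-stopping time with $\sigma>s$ a.s.\ because $N$ is continuous with $N_s=1$; then $\int_s^{t\wedge\sigma}N_u^{-1}\,dM_u=W^i_{s,\,t\wedge\sigma}$ is $\cF^{\hat{K}^v}_{s,t}$-measurable. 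To reach the full increment I would subdivide $[s,t]$ into $n$ equal pieces, run the same construction afresh on each piece (restarting the flow at the left endpoint and invoking stationarity), obtain on the $k$-th piece an $\cF^{\hat{K}^v}_{s,t}$-measurable variable equal to the corresponding increment of $W^i$ up to that piece's stopping time, and sum. The resulting $\cF^{\hat{K}^v}_{s,t}$-measurable sum differs from $W^i_{s,t}$ only through the pieces on which the stopping time fires before the right endpoint.

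Finally I would let $n\to\infty$. By property (5) of Definition \ref{defsfk} applied to $\hat{f}'\in C_0(\hat{G}_v)$, one has $N_u\to 1$ in $L^2$, uniformly, as $u\downarrow s$, so by path-continuity the probability that the stopping time fires within a subinterval of length $(t-s)/n$ tends to $0$, uniformly in $k$ by stationarity. An $L^2$ estimate then forces the correction terms to vanish, so $W^i_{s,t}$ is an $L^2$-limit of $\cF^{\hat{K}^v}_{s,t}$-measurable variables and hence itself $\cF^{\hat{K}^v}_{s,t}$-measurable. I expect the delicate point to be precisely this last control of the zeros of $N$: one must quantify, uniformly over the subintervals, how quickly the mass $\hat{K}^v_{s,u}(\hat{x},\cdot)$ can leave the region where $\hat{f}'=1$, which is exactly where the continuity estimates of the SFK and of solutions of \eqref{appro} enter.
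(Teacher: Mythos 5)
Your argument is correct in substance but takes a genuinely different route from the paper's. The paper's proof is a two-line localization along the edge: taking $y=\hat{e}^v_i(r)$ with $r$ arbitrarily large and invoking Lemma 6 of \cite{MR50101010}, it identifies $\hat{K}^v_{s,t}(y)=\delta_{\hat{e}^v_i(r+W^i_{s,t})}$ up to the hitting time $\sigma^y_s$ of the vertex, so that $W^i_{s,t}$ is read off directly from the position of the atom, and letting $r\to\infty$ makes $\sigma^y_s>t$ with probability tending to one. You instead invert the martingale part of \eqref{appro}: with $dM_u=N_u\,W^i(du)$ and $N_s=1$, you recover $W^i_{s,\cdot}$ via $\int N_u^{-1}dM_u$ up to the stopping time where $N$ drops to $1/2$, and then patch over a fine subdivision; your stopping time plays exactly the role that "arbitrarily large $r$" plays in the paper. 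What your route buys is self-containedness: it uses only the SDE and the SFK axioms, and never needs the pathwise description of solutions along an edge (atomicity of the kernel and coincidence with the translation flow) imported from \cite{MR50101010}; the price is the stopping-time and subdivision bookkeeping. Two details should be nailed down to make it airtight: (a) pathwise continuity of $u\mapsto N_u=\hat{K}^v_{s,u}(1_{\hat{E}^v_i}\hat{f}')(\hat{x})$ is not automatic from Definition \ref{defsfk}, so take $\hat{f}$ of class $C^3$ with $\hat{f}'$ compactly supported in the open edge; then $\hat{f}'\in\hat{\mathcal D}_v$ and \eqref{appro} applied to $\hat{f}'$ exhibits $N$ as a continuous semimartingale, which also legitimizes $\sigma>s$ a.s.; (b) in the final limit, your bare "$L^2$ estimate" needs care: a crude Cauchy--Schwarz across the $n$ pieces would require the rate $\PP(\sigma_k<t_k)=o(n^{-2})$, whereas the clean finish is to observe that the corrections $W^i_{t_k\wedge\sigma_k,t_k}$ are martingale increments for the global filtration (optional stopping), hence pairwise orthogonal, giving
\begin{equation*}
\EE\big[(W^i_{s,t}-S_n)^2\big]=\sum_{k}\EE\big[(W^i_{t_k\wedge\sigma_k,t_k})^2\big]\le C\,(t-s)\,\max_k \PP(\sigma_k<t_k)^{1/2},
\end{equation*}
which tends to $0$ as soon as $\max_k\PP(\sigma_k<t_k)\to 0$, exactly what your continuity argument (with stationarity making this probability independent of $k$) provides. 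With these two repairs your proof is complete, and arguably more robust than the paper's, since it applies to any SFK solving an SDE of this form without knowing a priori that the flow is deterministic away from the vertex.
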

\prf
Let $y=\hat{e}^{v}_i(r)\in \hat{E}^v_i$. Following Lemma 6 \cite{MR50101010}, we prove that $\hat{K}^v_{s,t}(y)=\delta_{\hat{e}^{v}_i(r+W^{i}_{s,t})}$ for all $s\le t\le \sigma^y_s$ where
$$\sigma_s^{y}=\inf\{u\geq s; \; \hat{e}^v_i(r+W^i_{s,u})=0\}.$$
Since this holds for arbitrarily large $r$, the lemma holds.
\qed

\medskip
In all this section, we assume that 
\begin{equation}\label{assum}
\big(\hat{\cF}_{s,t}:=\vee_{v\in V} \cF_{s,t}^{\hat{K}^v};\; s\leq t\big) \quad \text{is i.d.i.} 
\end{equation}
We will prove the following 
\bthm \label{hatktok}
There exists $K$ a unique (up to modification) SFK on $G$, such that 
\begin{itemize}
\item $\mathcal F^K_{s,t}\subset\hat{\cF}_{s,t}$ for all $s\leq t$,
\item $(K,W)$ is a solution to the SDE:
For all $s\leq t$, $f\in {\mathcal D}$, $x\in {G}$, a.s.
$$K_{s,t}f(x)=f(x)+\sum_{i\in I} \int_s^tK_{s,u}(1_{E_i}f')(x)W^i(du) + \frac{1}{2}\int_s^tK_{s,u}f''(x)du.$$
\end{itemize}
and such that defining for $s\in\RR$, $v\in V$ and $x\in G_v$,
\begin{equation}\label{defin}
\rho^{x,v}_s=\inf\{u\geq s : \;K_{s,u}(x,G_v)<1\}
\end{equation}
we have that for all $s<t$ in $\RR$ and $x\in G_v$, a.s. on the event $\{t<\rho^{x,v}_s\}$,
\begin{equation}\label{holf}
i_v * K_{s,t}(x) = \hat{K}^v_{s,t}(\hat{x}^v).
\end{equation}

\ethm

Note that this Theorem implies (i) of Theorem \ref{klo}.

\subsection{Construction of $K$}\label{cons}
For all $s\in\RR$, $x\in G$, define
$$\tau_s^x=\inf\{t\geq s;\; e_i(r+W^i_{s,t})\in V\}$$
where $i\in I$ and $r\in \RR$ are such that $x=e_i(r)$. For $s<t$, define the kernel $K^0_{s,t}$ on $G$ by:
On the event $A_{s,t}^c$, set $K^0_{s,t}(x)=\delta_x$, and on the event $A_{s,t}$, if $x=e_i(r)\in G$ and $v=e_i(r+W^i_{s,\tau_s^x})$, then
\begin{eqnarray*}
K^0_{s,t}(x)&=&\left\{\begin{array}{ccccc} \delta_{e_i(r+W^i_{s,t})} & \hbox{ if } & t\le \tau_s^x&&\\
i_v^{-1}*\hat{K}^v_{s,t}(\hat{x}^v) & \hbox{ if } & t > \tau_s^x  \end{array}\right.
\end{eqnarray*} 
(i.e. for $A\in\cB(G)$, $\big(i_v^{-1}*\hat{K}^v_{s,t}(\hat{x}^v)\big)(A)=\hat{K}^v_{s,t}(\hat{x}^v,i_v(A\cap G_v))$). Note that on $A_{s,t}\cap\{t>\tau_s^x\}\cap\{v=e_i(r+W^i_{s,\tau_s^x})\}$, we have that the support of $\hat{K}^v_{s,t}(\hat{x}^v)$ is included in $i_v(G_v)$ so that $K^0_{s,t}(x)\in\cP(G)$. 
Remark also that on $A_{s,t}\cap\{v=e_i(r+W^i_{s,\tau_s^x})\}$, a.s.
$$K^0_{s,t}(x)=i_v^{-1}*\hat{K}^v_{s,t}(\hat{x}^v).$$
\blem\label{st} For all $s<t<u$ and all $\mu\in \cP(G)$, a.s. on $A_{s,u}$,
\begin{equation}\label{flowK0} \mu K^0_{s,u}=\mu K^0_{s,t}K^0_{t,u}.\end{equation}
\elem
\prf Fix $s<t<u$ and note that $A_{s,u}\subset A_{s,t}\cap A_{t,u}$ a.s. We prove the lemma for $\mu=\delta_x$ which is enough since by Fubini's Theorem : $\forall A\in\mathcal B(G)$
$$\EE[|\mu K^0_{s,u}(A)-\mu K^0_{s,t}K^0_{t,u}(A)|]\le \int_G \EE[|K^0_{s,u}(x,A)-K^0_{s,t}K^0_{t,u}(x,A)|]\mu(dx).$$
There exist $i$ and $r$ such that $x=e_i(r)$. 
Denote by $Y=e_i(r+W^i_{s,t})$, when $t\leq\tau_s^x$.\\ 
If $u\leq\tau_s^x$, then it is easy to see that \eqref{flowK0} holds after having remarked that $\tau_t^Y=\tau_s^x$.\\
If $t\le \tau_s^x<u$, then $K^0_{s,t}(x)=\delta_Y$ and $K^0_{s,u}(x)=i_v^{-1}*\hat{K}^v_{s,u}(\hat{x}^v)$ with $v=e_i(r+W^i_{s,\tau_s^x})$. We still have $\tau_t^Y=\tau_s^x$ which is now less than $u$. 
Write $K^0_{s,t}K^0_{t,u}(x)=K^0_{t,u}(Y)=i_{v'}^{-1}*\hat{K}^{v'}_{t,u}(\hat{Y}^v)$ where $v'=e_i(e_i^{-1}(Y)+W^i_{t,\tau_t^Y})$. Note that $v'=v$ since (we have $e_i^{-1}(Y)=r+W^i_{s,t}$)
$$v'=e_i(r+W^i_{s,t}+W^i_{t,\tau_t^Y})=e_i(r+W^i_{s,\tau_s^x})=v.$$
Since $\hat{K}^v$ is a flow, we get $$K^0_{s,t}K^0_{t,u}(x)=i_v^{-1}*\hat{K}^v_{t,u}(\hat{Y}^v)=i_v^{-1}*\hat{K}^v_{s,t}\hat{K}^v_{t,u}(\hat{x}^v)=i_v^{-1}*\hat{K}^v_{s,u}(\hat{x}^v)=K^0_{s,u}(x).$$
If $\tau_s^x<t$, then $K^0_{s,t}(x)=i_v^{-1}*\hat{K}^v_{s,t}(\hat{x}^v)$ and  $K^0_{s,u}(x)=i_v^{-1}*\hat{K}^v_{s,u}(\hat{x}^v)$ with $v$ defined as above.
Let $f$ be a bounded measurable function on $G$. Then $K^0_{s,u}f(x)=\hat{K}^v_{s,u}\hat{f}^v(\hat{x}^v)$. And since $\hat{K}^v$ is a flow,
$$K^0_{s,u}f(x)=\hat{K}^v_{s,t}\hat{K}^v_{t,u}\hat{f}^v(\hat{x}^v).$$
Note that on the event $A_{s,t}\cap \{\tau_s^x<t\}$, the support of $K^0_{s,t}(x)$ is included in $G^L_v$, and for all $y$ in the support of $K^0_{s,t}(x)$, the support of  $\hat{K}^v_{t,u}(\hat{y}^v)$ is included in $\hat{G}^L_v$. In other words, it holds that on the event $A_{s,t}\cap \{\tau_s^x<t\}$, for all $y$ in the support of $K^0_{s,t}(x)$, $\hat{K}^v_{t,u}\hat{f}^v(\hat{y}^v)=K^0_{t,u}f(y)$ and thus that $\hat{K}^v_{s,t}\hat{K}^v_{t,u}\hat{f}^v(\hat{y}^v)=K^0_{s,t}K^0_{t,u}f(y)$. This implies the Lemma. \qed

\medskip
We will say that a random kernel $K$ is {\em Fellerian} when for all $n\ge 1$ and all $h\in C_0(G^n)$, we have $\EE[K^{\otimes n} h]\in C_0(G^n)$.

\blem \label{k0feller} For all $s<t$ , $K^0_{s,t}$ is Fellerian. \elem
\prf By an approximation argument (see the proof of Proposition 2.1 \cite{MR2060298}), it is enough to prove the following $L^2$-continuity for $K^0$ : for all $f\in C_0(G)$ and all $x\in G$, $\lim_{y\to x} \EE[(K^0_{0,t}f(y)-K^0_{0,t}f(x))^2]=0$. Write
$$(K^0_{0,t}f(y)-K^0_{0,t}f(x))^2=(K^0_{0,t}f(y)-K^0_{0,t}f(x))^2 1_{A_{0,t}}+(f(y)-f(x))^2 1_{A_{0,t}^c}.$$
Suppose that $x$ belongs to an edge $E_i$. Using the convergence in probability $W^i_{\tau_0^y}\rightarrow W^i_{\tau_0^x}$ as $y\rightarrow x$, we see that $\PP(K^0_{0,\tau_0^y}(y)\neq K^0_{0,\tau_0^x}(x))$ converges to $0$ as  $y\to x$. To conclude, it remains to prove that for $v\in\{g_i,d_i\}$ (i.e. $v$ is an end point of $E_i$), denoting  $C^v_t=A_{0,t}\cap\{K^0_{0,\tau_0^y}(y)=K^0_{0,\tau_0^x}(x)=\delta_{v}\}$, we have
$$\lim_{y\to x}\EE[(K^0_{0,t}f(y)-K^0_{0,t}f(x))^2 1_{C^v_t}]=0.$$
Since on $C^v_t$, $K^0_{0,t}(z)=i_{v}^{-1}*\hat{K}^{v}_{0,t}(\hat{z}^{v})$ for $z\in\{x,y\}$, our result holds. \qed

\medskip 
\blem\label{lp}Let $K_1$ and $K_2$ be two independent Fellerian  kernels. Then $K_1K_2$ is a Fellerian kernel. \elem 
\prf Set $\P^{(n)}_1=\EE[K_1^{\otimes n}]$ and $\P^{(n)}_2=\EE[K_2^{\otimes n}]$. Then $\P^{(n)}_1\P^{(n)}_2=\EE[(K_1K_2)^{\otimes n}]$. This implies the lemma. \qed

\medskip

Define for $n\in\NN$, $\DD_n:=\{k2^{-n};\; k\in\ZZ\}$. For $s\in \RR$, let $s_n=\sup\{u\in \DD_n;\; u\leq s\}$ and $s^+_n=s_n+2^{-n}$. For every $n\geq 1$ and $s\leq t$ define 
$$K^n_{s,t}=K^0_{s,s^+_n}K^0_{s^+_n,s^+_n+2^{-n}}\dots K^0_{t_n-2^{-n},t_n}K^0_{t_n,t}.$$
if $s_n^+\leq t$ and $K^n_{s,t}=K^0_{s,t}$ if $s_n^+>t$.
Note that Lemma \ref{k0feller} and Lemma \ref{lp} imply that $K^n_{s,t}$ is Fellerian (since the kernels $K^0_{s,s^+_n}$, $K^0_{s^+_n,s^+_n+2^{-n}}$, $\dots ,K^0_{t_n-2^{-n},t_n}$, $K^0_{t_n,t}$ are independent by (\ref{assum})). \\

Define $\Omega^n_{s,t}=\{\sup_i\sup_{\{s<u<v<t;\, |v-u|\le 2^{-n}\}}|W^i_{u,v}|<L\}$. Note that for all $s\leq u<v\leq t$ such that $|u-v|\leq 2^{-n}$, we have $\Omega^n_{s,t}\subset A_{u,v}$.\\
Let $\Omega_{s,t}=\cup_n\Omega^n_{s,t}$, then $\PP(\Omega_{s,t})=1$. Define now, for $\omega\in \Omega_{s,t}$, $K_{s,t}(\omega)=K^n_{s,t}(\omega)$ where $n=n_{s,t}=\inf\{k;\; \omega\in\Omega^k_{s,t}\}$ and set $K_{s,t}(x)=\delta_{x}$ on $\Omega_{s,t}^c$.

\blem \label{lemomn} For all $s<t$ and all $\mu\in\cP(G)$,  a.s. we have
$$\mu K^m_{s,t}=\mu K_{s,t}\quad \text{for all}\ m\geq n_{s,t}.$$
\elem
\prf For $m\geq n_{s,t}$, we have (denoting $n=n_{s,t}$)
$$\mu K_{s,t}=\mu K^0_{s,s^+_n}K^0_{s^+_n,s^+_n+2^{-n}}\dots K^0_{t_n-2^{-n},t_n}K^0_{t_n,t}.$$
where $s^+_n, s^+_n+2^{-n},\cdots,t_n$ are also in $\DD_m$. Moreover for all $(u,v)\in\{(s,s^+_n), (s^+_n, s^+_n+2^{-n}),\cdots,(t_n,t)\}$, we have $\Omega^n_{s,t}\subset A_{u,v}$. Now applying Lemma \ref{st} and an independence argument, we see that $\mu K_{s,t}=\mu K^m_{s,t}$.
\qed

\bprop $K$ is a SFK. \eprop
\prf Obviously the increments of $K$ are independent. Fix $s<t<u$, then by the previous lemma and Lemma \ref{st} a.s. for $m$ large enough (i.e. $m\ge \max\{n_{s,u},n_{s,t},n_{t,u}\}$), we have
\begin{eqnarray}
\mu K_{s,u}&=&\mu K^m_{s,u}=\mu K^m_{s,t_m}K^m_{t_m,t_m^+}K^{m}_{t^+_m, t^+_m+2^{-m}}\cdots K^m_{u_m,u}\nonumber\\
&=&\mu K^m_{s,t_m}K^m_{t_m,t}K^{m}_{t,t^+_m}K^{m}_{t^+_m, t^+_m+2^{-m}}\cdots K^m_{u_m,u}\nonumber\\
&=&\mu K^{m}_{s,t}K^{m}_{t,u}\nonumber\\
&=&\mu K_{s,t}K_{t,u}.\nonumber\
\end{eqnarray} 
This proves that $K$ satisfies the flow property.

Fix $k\ge 1$, $h\in C^0(G^k)$. Let $\alpha>0$ and $n_1\in\NN$ such that $\mathbb P(n_{s,t}>n_1)<\alpha$. Then for all $x, y\in G^k$, since $\PP(\Omega_{s,t})=1$, we have
\begin{eqnarray}
|\EE[K_{s,t}^{\otimes k}h(y)]-\EE[K_{s,t}^{\otimes k}h(x)]|&\leq& \sum_{n\leq n_1} \EE\big[\big((K^n_{s,t})^{\otimes k}h(y)-(K^n_{s,t})^{\otimes k}h(x)\big)^2\big]^{\frac{1}{2}}\nonumber\\
&+&2\alpha||h||_{\infty}.\nonumber\
\end{eqnarray}
Now since $K^n$ is Feller for all $n$, we deduce that $$\limsup_{y\rightarrow x}|\EE[K_{s,t}^{\otimes k}h(y)]-\EE[K_{s,t}^{\otimes k}h(x)]|\leq 2\alpha||h||_{\infty}.$$
Since $\alpha$ is arbitrary, it holds that for all $s<t$, $K_{s,t}$ is Fellerian.

\blem \label{felent} For all $x\in G$ and $f\in C_0(G)$, $\lim_{|t-s|\to 0} \EE[(K_{s,t}f(x)-f(x))^2]=0$. \elem
\prf  Take $x=e_i(r)$ and let $\epsilon>0$. Then there exists $\alpha>0$ such that $|t-s|<\alpha$ implies $\PP(A_{s,t})>1-\epsilon$. Note that a.s. on $A_{s,t}$, $K_{s,t}(x)=K^0_{s,t}(x)$. If $x\not\in V$, then $\EE[(K_{s,t}f(x)-f(x))^2 1_{A_{s,t}}]\le 2\|f\|_\infty^2 \PP(\tau_s^x<t)+\EE[(f(e_i(r+W^i_{s,t}))-f(e(r)))^2 1_{t\le \tau_s^x}]$. 
The two right hand terms clearly converge to $0$ as $|t-s|$ goes to $0$. This implies the lemma when $x\not\in V$.  
When $x=v\in V$, then a.s. on $A_{s,t}$, $K_{s,t}f(x)=\hat{K}^v_{s,t}\hat{f}^v(0)$. And we can conclude since $\hat{K}^v$ is a SFK.
\qed

\medskip
This lemma with the flow property imply that for all $f\in C_0(G)$ and all $x\in G$, $(s,t)\mapsto K_{s,t}f(x)$ is continuous as a mapping from $\{s<t\}\to L^2(P)$.
Now since for all $s<t$ in $\DD$, the law of $K_{s,t}$ only depends on $|t-s|$, the continuity of this mapping implies that this also holds for all $s<t$. Thus, we have proved that $K$ is a SFK. \qed

\subsection{The SDE satisfied by $K$}\label{sat}
Recall that each flow $\hat{K}^v$ solves equation $(\hat{E}^v)$ defined on $\hat{G}_v$. Then we have
\blem\label{yh}
For all $x\in G, f\in \mathcal D$ and all $s<t$, a.s. on $A_{s,t}$

$$K^0_{s,t}f(x) = f(x) +  \sum_{i\in I} \int_s^t K^0_{s,u}(1_{E_i}f')(x) W^i(du) + \frac{1}{2} \int_s^t K^0_{s,u}f''(x) du.$$
\elem

\prf Let $x=e_i(r)$ with $i\in I_v$. Recall the notation $\hat{x}_v=i_v(x)\in \hat{G}_v$. Then denoting $B_{s,t}^v= A_{s,t}\cap \{\tau_s^x\le t\}\cap\{e_i(r+W^i_{s,\tau_s^x})=v\}$, we have that a.s. on $A_{s,t}$,
\begin{eqnarray*}
K^0_{s,t}f(x)
&=& (f\circ e_i)(r+W^i_{s,t}) 1_{\{\tau_s^x>t\}} +  \sum_{v\in V} \hat{K}^v_{s,t}\hat{f}^v(\hat{x}_v) 1_{B_{s,t}^v}.
\end{eqnarray*}
Thus a.s. on $A_{s,t}$,
\begin{eqnarray*}
K^0_{s,t}f(x)
&=& f(x) \\
&+& 1_{\{\tau_s^x>t\}} \left(\int_s^t(f\circ e_i)'(r+W^i_{s,u}) W^i(du) + \frac{1}{2} \int_s^t(f\circ e_i)''(r+W^i_{s,u})  du \right) \\
&+& \sum_{v\in V} 1_{B_{s,t}^v} \left(\sum_{j\in I_v}\int_s^t \hat{K}^v_{s,u}\big(1_{\hat{E}_j^v}(\hat{f}^v)'\big)(\hat{x}_v) W^j(du) + \int_s^t \hat{K}^v_{s,u}(\hat{f}^v)''(\hat{x}_v) du\right)\\
&=& f(x) + 1_{\{\tau_s^x>t\}} \left(\int_s^t K^0_{s,u}(1_{E_i}f')(x) W^i(du) + \frac{1}{2} \int_s^t K^0_{s,u}f''(x)  du \right) \\
&&  + \sum_{v\in V} 1_{B_{s,t}^v} \left(\sum_{j\in I_v}\int_s^t K^0_{s,u}(1_{E_j}f')(x) W^j(du) + \int_s^t K^0_{s,u}f''(x)  du\right)\\ 
\end{eqnarray*}
This implies the lemma. \qed
\blem
For all $n\in\mathbb N$, $x\in G$, $s<t$ and all $f\in \mathcal D$ a.s. on $\Omega^n_{s,t}$, we have
\begin{eqnarray}
K^n_{s,t}f(x) &=& f(x) +  \sum_{i\in I}\int_s^t K^n_{s,u}(1_{E_i}f')(x) W^i(du) \nonumber\\
&&+ \quad\frac{1}{2}\int_s^t K^n_{s,u}f''(x) du. \nonumber
\end{eqnarray}
\elem
\prf
The proof will be by induction on $q=\text{Card}\ \{s,s^+_n,s^+_n+2^{-n},\cdots,t_n,t\}$. For $q=2$, this is immediate from Lemma \ref{yh} since $\Omega^n_{s,t}\subset A_{s,t}$. Assume this is true for $q-1$ and let $s<t$ such that $\text{Card}\ \{s,s^+_n,s^+_n+2^{-n},\cdots,t_n,t\}=q.$ Then a.s.
\begin{eqnarray}
K^n_{s,t}f(x) &=& K^n_{s,t_n}K^n_{t_n,t}f(x)\nonumber\\
& =& K^n_{s,t_n}\left(f+\sum_{i\in I}\int_{t_n}^{t}  K^n_{t_n,u}(1_{E_i}f') W^i(du) + \frac{1}{2}\int_{t_n}^{t} K^n_{t_n,u}f''du\right)(x)\nonumber\\
&=& K^n_{s,t_n}f(x)\nonumber\\
&+&\sum_{i\in I} \int_{t_n}^{t} K^n_{s,t_n}K^n_{t_n,u}(1_{E_i}f')(x) W^i(du) + \frac{1}{2}\int_{t_n}^{t} K^n_{s,t_n}K^n_{t_n,u}f''(x)du\nonumber\\
&=& f(x) +  \sum_i \int_s^t K^n_{s,u}(1_{E_i}f')(x) W^i(du) + \frac{1}{2}\int_s^t K^n_{s,u}f''(x) du.\nonumber\
\end{eqnarray}
by independence of increments and using the fact that $K^n_{s,t_n}(x)$ is supported by a finite number of points. 
\qed
\medskip

Thus we have
\blem
For all $x\in G$, $f\in \mathcal D$ and all $s<t$, a.s. 
\begin{equation} \label{GSDE}
K_{s,t}f(x) = f(x) + \sum_{i\in I} \int_s^t K_{s,u}(1_{E_i}f')(x) W^i(du) + \frac{1}{2} \int_s^t K_{s,u}f''(x) du.
\end{equation}
\elem
\prf Note that for all $n$, on $\Omega^n_{s,t}$, for all $u\in [s,t]$, a.s. $K_{s,u}(x)=K^n_{s,u}(x)$. Thus a.s. on $\Omega^n_{s,t}$, (\ref{GSDE}) holds in $L^2(\PP)$ and finally a.s. (\ref{GSDE}) holds. \qed

\medskip 
Remark: When $W^i=W$ for all $i$, then $(K,W)$ solves the SDE (E).

\smallskip

This Lemma with the fact that $K$ is a SFK permits to prove that $K$ satisfies the first two conditions of Theorem \ref{hatktok}. 
Note that for all $s\le t$ and all $x\in G$, we have that a.s. on $A_{s,t}$,  $K_{s,t}(x)=K^0_{s,t}(x)$. 
Thus a.s. on $A_{s,t}$, (\ref{holf}) holds. 
Now, we want to prove that a.s. (\ref{holf}) holds on the event $\{t<\rho^{x,v}_s\}$ (note that a.s.  $A_{s,t}\cap\{t>\tau^x_s\}\subset\{\tau^x_s<t<\rho^{x,v}_s\}$). 
By Lemma \ref{lemomn}, a.s.  for all $m\geq n_{s,t}$ such that $s_m^+\le t$, 
$$K_{s,t}(x)=K^{0}_{s,s_m^+}K^0_{s^+_m,s^+_m+2^{-m}}\dots K^0_{t_m-2^{-m},t_m}K^0_{t_m,t}(x).$$
Clearly on $\{t<\rho^{s,v}_s\}$, a.s. $K^0_{s,s_m^+}(x)=i_v^{-1}*\hat{K}^{v}_{s,s_m^+}(\hat{x}^v)$ and for all $y$ in the support of $K^0_{s,s_m^+}(x)$, $K^0_{s^+_m,s^+_m+2^{-m}}(y)=i_v^{-1}*\hat{K}^v_{s^+_m,s^+_m+2^{-m}}(\hat{y}^v)$. Thus, on $\{t<\rho^{s,v}_s\}$, a.s. $K^{0}_{s,s_m^+}K^0_{s^+_m,s^+_m+2^{-m}}(x)=i_v^{-1}*\hat{K}^{v}_{s,s_m^+}\hat{K}^v_{s^+_m,s^+_m+2^{-m}}(\hat{x}^v).$ The same argument shows that on $\{t<\rho^{s,v}_s\}$, a.s.
$$K_{s,t}(x)=i_v^{-1}*\hat{K}^{v}_{s,s_m^+}\hat{K}^v_{s^+_m,s^+_m+2^{-m}}\dots \hat{K}^v_{t_m-2^{-m},t_m}\hat{K}^v_{t_m,t}(\hat{x}^v)=i_v^{-1}*\hat{K}^{v}_{s,t}(\hat{x}^v).$$
To conclude the proof of Theorem \ref{hatktok}, it remains to prove that if $K'$ is a SFK satisfying also the conditions of Theorem \ref{hatktok}, then $K'$ is a modification of $K$. Since (\ref{holf}) holds for $K$ and $K'$, for all $s\le t$ and all $\mu\in\cP(G)$ a.s. on $A_{s,t}$, $\mu K'_{s,t}=\mu K_{s,t}(=\mu K^0_{s,t})$. Thus for all $s\le t$ and $x\in G$, denoting $n=n_{s,t}$, a.s.
\begin{eqnarray*}
K'_{s,t}(x) &=& K'_{s,s_n^+}\cdots K'_{t_n,t}(x) \\
&=& K_{s,s_n^+}\cdots K_{t_n,t}(x) \\
&=& K_{s,t}(x).
\end{eqnarray*}

\section{Construction of solutions of $(\hat{E}^v)$ out of a solution of $(E)$.}

Let $\mathcal{W}=(W^i;\; i\in I)$ be as in the previous section. 
Let $K$ be a SFK on $G$.
Assume that $(K,W)$ satisfies the SDE:
For all $s\leq t$, $f\in {\mathcal D}$, $x\in {G}$, a.s.
$$K_{s,t}f(x)=f(x)+\sum_{i\in I} \int_s^tK_{s,u}(1_{E_i}f')(x)W^i(du) + \frac{1}{2}\int_s^tK_{s,u}f''(x)du.$$
Following Lemma 3 \cite{MR50101111}, we prove that $\mathcal F^{W^i}_{s,t}\subset \mathcal F^{K}_{s,t}$ for all $i\in I$ and $s\le t$.
\medskip
In this section, we will prove the following 
\begin{theorem}\label{ktohatk}
For each $v\in V$, there exists a unique (up to modification) SFK $\hat{K}^v$ on $\hat{G}^v$ such that 
\begin{itemize}
\item for all $s\le t$, $\hat{\cF}_{s,t}:=\vee_{v\in V} \cF_{s,t}^{\hat{K}^v}\subset{\cF}_{s,t}^K$, 
\item for all $v\in V$, $(\hat{K}^v,\mathcal{W}^v)$ is a solution to the SDE:
For all $s\leq t$, $\hat{f}\in \hat{\mathcal D}_v$, $\hat{x}\in \hat{G}_v$, a.s.
\begin{equation}\label{equ}
\hat{K}^v_{s,t}\hat{f}(\hat{x})=\hat{f}(\hat{x})+\sum_{i\in I_v} \int_s^t\hat{K}^v_{s,u}(1_{\hat{E}^v_i}\hat{f}')(\hat{x})W^i(du) + \frac{1}{2}\int_s^t\hat{K}^v_{s,u}\hat{f}''(\hat{x})du.
\end{equation}
\end{itemize}
and such that defining for $s\in \RR$ and $x\in G_v$, $\rho^{x,v}_s$ by (\ref{defin}), we have that for all $t>s$, a.s. on the event $\{t<\rho_s^{x,v}\}$, (\ref{holf}) holds.
\end{theorem}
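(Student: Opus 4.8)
The plan is to mirror, in reverse, the construction carried out in Section 3: there a local kernel $K^0$ on $G$ was glued together from edge-wise Brownian motion and the star flows $\hat K^v$; here I build a local kernel $\hat K^{v,0}$ on the star graph $\hat G_v$ by gluing the push-forward of $K$ near the vertex $0$ with free Brownian motion far out on the (infinite) edges, and then compose over small dyadic time intervals to pass to all times.

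First I would fix $v$, choose a radius $\ell\in(0,L)$, and take the matching region to be the ball $\hat G_v^{\ell}$, which via $i_v^{-1}$ corresponds to a subset of $G_v$ strictly interior to every adjacent edge (since $\ell<L\le L_i$ for $i\in I_v$). For $\hat x=\hat e^v_i(r)$ and $s\le t$ I set $\hat K^{v,0}_{s,t}(\hat x)=\delta_{\hat x}$ off a small-increment event (the analogue of $A_{s,t}$, now with threshold $L-\ell$); on that event, a point starting outside $\hat G_v^{\ell}$ is transported by $\hat e^v_i(r+W^i_{s,\cdot})$ (free Brownian motion on the infinite edge) until it first reaches radius $\ell$, after which, and for points starting inside $\hat G_v^{\ell}$, I put $\hat K^{v,0}_{s,t}(\hat x)=i_v*K_{s,t}(i_v^{-1}(\hat x))$. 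The threshold guarantees that over $[s,t]$ the radial part stays strictly below $L\le L_i$, so $K$ started from $i_v^{-1}(\hat x)$ never reaches a far vertex $d_i$; in particular it does not leave $G_v$, the push-forward is a genuine element of $\cP(\hat G_v)$, and on the edge interiors the two regimes coincide because there $K$ is just Brownian motion.

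The key steps then duplicate the lemmas of Section 3. I would prove the flow identity $\mu\hat K^{v,0}_{s,u}=\mu\hat K^{v,0}_{s,t}\hat K^{v,0}_{t,u}$ on the relevant event (the analogue of Lemma \ref{st}), using the flow property of $K$ inside the ball and the semigroup relation for Brownian motion outside; establish that $\hat K^{v,0}_{s,t}$ is Fellerian (analogue of Lemma \ref{k0feller}), whence by Lemma \ref{lp} and the i.d.i.\ property of $\cF^K$ the dyadic compositions $\hat K^{v,n}$ are Fellerian; and take the limit exactly as in Lemmas \ref{lemomn} and \ref{felent} to produce a kernel $\hat K^v$ that is an SFK on $\hat G_v$. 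To get the SDE \eqref{equ} I would first prove it for $\hat K^{v,0}$ on the small-increment event (analogue of Lemma \ref{yh}): the far, edge-wise Brownian part produces the $1_{\hat E^v_i}\hat f'$ and $\hat f''$ terms by It\^o's formula, while the ball part is obtained by pushing forward through the isometry $i_v$ the SDE satisfied by $K$ (note $i_v$ preserves $f'$, $f''$ and the gluing condition defining $\hat{\mathcal D}_v$); the induction over dyadic intervals and the limiting argument then give \eqref{equ}. The inclusion $\hat\cF_{s,t}\subset\cF^K_{s,t}$ is immediate since $\hat K^v$ is built from $K$ and the $W^i$, and $\cF^{W^i}_{s,t}\subset\cF^K_{s,t}$ as recalled before the theorem.

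Finally, the localization \eqref{holf} on $\{t<\rho^{x,v}_s\}$ follows by the dyadic telescoping used at the end of the proof of Theorem \ref{hatktok}: before $K$ escapes $G_v$, each elementary step of $\hat K^v$ equals $i_v*K$ of the corresponding step, so the composition collapses to $i_v*K_{s,t}(x)$. Uniqueness up to modification is proved as there: any SFK $\hat K'$ satisfying the three bullet points must, by \eqref{holf}, agree with $i_v*K$ (hence with $\hat K^v$) on the small-increment events, and composing over a dyadic subdivision forces $\hat K'_{s,t}(\hat x)=\hat K^v_{s,t}(\hat x)$ a.s. I expect the main obstacle to be the gluing lemma for $\hat K^{v,0}$: one must check that the free Brownian extension on the infinite star edges is compatible with $K$ near the boundary of the matching region, in particular ruling out any interaction with the far vertices $d_i$ (where $K$, unlike the star flow, would shed mass onto edges absent from $\hat G_v$), which is exactly what the choice $\ell<L$ together with the small-increment event is designed to ensure.
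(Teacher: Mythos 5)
Your proposal is correct and follows essentially the same route as the paper: the paper likewise defines a one-step kernel $\hat K^{0,v}_{s,t}$ on $\hat G_v$ by gluing $i_v * K_{s,t}$ (used when the driving Brownian motion brings the point to $v$) with free Brownian transport $\delta_{\hat e^v_i(r+W^i_{s,t})}$ elsewhere, all on the small-increment event $A_{s,t}$, then composes over dyadic subdivisions and transfers the SFK property, the SDE \eqref{equ}, the localization \eqref{holf} and uniqueness verbatim from Sections 3.1--3.2. The only difference is cosmetic: you switch regimes at the hitting time of the sphere of radius $\ell<L$ around $v$, whereas the paper switches at the hitting time $\tau^x_s$ of the vertex set (checking that the vertex reached is $v$); on the respective small-increment events the two rules produce the same composed flow, and your choice of threshold $L-\ell$ correctly rules out interaction with the far vertices, which is the same confinement the paper's event $A_{s,t}$ ensures.
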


\prf 
Fix $v\in V$. For $s\in\RR$ and $\hat{x}=\hat{e}^v_i(r)\in\hat{G}_v$, if $\hat{x}\in i_v(G_v)$, then denote $x=e_i(r)$ (and we have $\hat{x}=i_v(x)$). Recall the definition of $\tau_s^x$.
Recall also the definition of $A_{s,t}$ from (\ref{A}).
Define the kernel $\hat{K}^{0,v}_{s,t}$ by
\begin{itemize}
\item On $A^c_{s,t}$ :  $\hat{K}^{0,v}_{s,t}(\hat{x})=\delta_{\hat{x}}$.
\item On $A_{s,t}$ : Let $\hat{x}=\hat{e}^v_i(r)$. If $\hat{x}=\hat{e}^v_i(r)\in i_v(G_v)$, $\tau_s^x<t$ and $e_i(r+W^i_{s,\tau_s^x})=v$, define $\hat{K}^{0,v}_{s,t}(\hat{x})=i_v *K_{s,t}(e_i(r))$.
And otherwise, define $\hat{K}^{0,v}_{s,t}(\hat{x})=\delta_{\hat{e}^v_i(r+W^i_{s,t})}$.
\end{itemize}

Now for $n\geq 1$, set
$$\hat{K}^{n,v}_{s,t}=\hat{K}^{0,v}_{s,s^+_n}\hat{K}^{0,v}_{s^+_n,s^+_n+2^{-n}}\dots \hat{K}^{0,v}_{t_n-2^{-n},t_n}\hat{K}^{0,v}_{t_n,t}.$$
if $s_n^+\leq t$ and $\hat{K}^{n,v}_{s,t}=\hat{K}^{0,v}_{s,t}$ if $s_n^+>t$.\\
Define $\Omega^n_{s,t}$, $\Omega_{s,t}$ and $n_{s,t}$ as in Section \ref{cons} and finally set $\hat{K}^{v}_{s,t}=\hat{K}^{n,v}_{s,t}$, where $n=n_{s,t}$ and $\hat{K}^{v}_{s,t}(\hat{x})=\delta_{\hat{x}}$ on $\Omega_{s,t}^c$. Following Sections \ref{cons} and \ref{sat}, we prove that $\hat{K}^v$ is a SFK satisfying  (\ref{equ}). Note that for all $s\le t$, $x\in G_v$, 
$K_{s,t}(x)=i_v^{-1}*\hat{K}^{0,v}_{s,t}(\hat{x}^v)$. Since for all $s\le t$ and $\hat{x}\in \hat{G}_v$, a.s. on $A_{s,t}$, $\hat{K}^{v}_{s,t}=\hat{K}^{0,v}_{s,t}$, the last statement of the Theorem holds.
It remains to remark the uniqueness up to modification, which can be proved in the same manner as for Theorem \ref{hatktok}.
\qed

\medskip
This Theorem implies (ii) of Theorem \ref{klo}.

\medskip

\section{Stochastic flows on star graphs \cite{MR50101010}.}\label{hbvc}
In this section, we overview the content of \cite{MR50101010} where equation $(E)$ on a single star graph has been studied. Let $G=\{0\}\cup\cup_{i\in I} E_i$ be a star graph where $I=\{1,\cdots,n\}$. Assume that $I_{+}=\{i : g_i=0\}=\{1,\cdots,n^+\}$ and  $I_{-}=\{i : d_i=0\}=\{n^++1,\cdots,n\}$ and set $n^-=n-n^+$. To each edge $E_i$, we associate $\alpha^i\in[0,1]$ such that $\sum_{i\in I} \alpha^i=1$. Denote by $e_i$ the orientation of $E_i$ and let $\alpha^+=\sum_{i\in I_+}\alpha^i$, $ \alpha^-=1-\alpha^+$. Let $\alpha=(\alpha^i)_{i\in I}$. In this section, we denote $(E^G_{\alpha})$ simply by $(E)$.

The construction of flows associated to $(E)$ is based on the skew Brownian motion (SBM) flow studied by Burdzy and Kaspi in \cite{MR2094439}. Let $W$ be a real white noise, then the Burdzy-Kaspi (BK) flow $Y$ associated to $W$ and $\beta\in[-1,1]$ is a SFM (see Section $7$ for the definition) solution to 
\begin{equation}\label{equat}
Y_{s,t}(x)=x+W_{s,t}+\beta L_{s,t}(x)
\end{equation}
where $L_{s,t}(x)$ is the local time of $Y_{s,\cdot}(x)$ at time $t$. For $x\in G$, $i\in I$ and $r\in\RR$ such that $x=e_i(r)$, define
$$\tau^x_s=\inf\{t\geq s : e_i(r+W_{s,t})=0\}.$$
\subsection{The case  $\alpha^+\neq\frac{1}{2}$.}
For $k\geq 1$, let $\Delta_k=\big\{u\in [0,1]^k : {\sum_{i=1}^k }u_i=1\big\}$, be the set of probability measures on $\{1,\dots,k\}$. From \cite{MR50101010}, we recall the following
\begin{theorem}\label{jkl} Let $m^+$ and $m^-$ be two probability measures respectively on $\Delta_{n^+}$ and $\Delta_{n^{-}}$ satisfying : $ \forall i\in[1,n^+]$ and $j\in[1,n^-]$, 
$$(+)\ \int_{\Delta_{n^+}} u_i m^+ (du)=\frac{\alpha^i}{\alpha^+},\ \ (-)\ \int_{\Delta_{n^-}}u_j m^- (du)=\frac{\alpha^{j+n^+}}{\alpha^-}.$$
(a) There exists a solution $(K,W)$ on $G$ unique in law such that if $Y$ is the BK flow associated to $W$ and $\beta=2\alpha^+-1$, then for all $s\le t$ in $\RR$, $x\in G$ a.s.
\begin{itemize}
\item[(i)] If $x=e_i(r)$, then $K_{s,t}(x)=\delta_{e_i(r+W_{s,t})}$ on $\{t\le\tau^x_s\}$.
\item [(ii)] On $\{t>\tau^x_s\}$, $K_{s,t}(x)$ is supported on $\{e_i(Y_{s,t}(r)), i\in I_+\}$ if $Y_{s,t}(r)>0$ and on $\{e_i(Y_{s,t}(r)), i\in I_-\}$ if $Y_{s,t}(r)\le 0$.
\item[(iii)] On $\{t>\tau^x_s,\; \pm Y_{s,t}(r)>0\}$, $U^{\pm}_{s,t}(x)=\left({K}_{s,t}(x,E_i),i\in I_{\pm}\right)$ is independent of $W$ and has for law $m^{\pm}$.
\end{itemize}

\noindent (b) For all SFK $K$ such that $(K,W)$ solves $(E)$, there exists a unique pair of measures $(m^+,m^-)$ satisfying conditions $(+)$ and $(-)$ and such that (i), (ii), (iii) above are satisfied.\\

\end{theorem}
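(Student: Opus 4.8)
The plan is to treat the two parts separately, building everything on top of the Burdzy--Kaspi flow $Y$ (skew Brownian motion with parameter $\beta=2\alpha^+-1$), which already encodes both the radial motion and the choice of side. Throughout I identify a point $x=e_i(r)\in G$ with its signed coordinate $r\in\RR$, where $r\ge 0$ for $i\in I_+$ and $r\le 0$ for $i\in I_-$; then $\tau^x_s$ is the first hitting time of $0$ by $r+W_{s,\cdot}$, before which $Y_{s,t}(r)=r+W_{s,t}$, and the sign of $Y_{s,t}(r)$ records whether the mass currently sits on an outgoing or an incoming edge. The only data not already carried by $Y$ is, on each side, how the unit mass is split among the $n^+$ (resp.\ $n^-$) edges; this is exactly the role of $m^+$ (resp.\ $m^-$), and I expect conditions $(+)$ and $(-)$ to be precisely what makes the averaged one-point motion $\EE[K]$ the Walsh Brownian motion attached to $(\alpha^i)_{i\in I}$.

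For existence in (a), I would construct $K$ directly from $Y$ together with an independent marking of excursions. The trajectory $t\mapsto Y_{s,t}(r)$ decomposes into excursions of $|Y|$ away from $0$, each excursion being positive or negative; on the interior of an edge the motion is the deterministic translation by $W$, so no further splitting can occur once an excursion has begun. Hence I attach to each excursion a single proportion vector, drawn from $m^+$ on positive excursions and from $m^-$ on negative ones, and set, whenever $Y_{s,t}(r)=\rho$, $K_{s,t}(e_i(r))=\sum_{j}U_j\,\delta_{e_j(\rho)}$, the sum ranging over $I_+$ or $I_-$ according to $\mathrm{sgn}(\rho)$ and $U$ being the vector marking the excursion straddling time $t$. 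The essential point is that the marks must be attached \emph{intrinsically} to the excursions of the stationary field $Y$, independently of the initial time $s$, so that the readings produced by $Y_{s,\cdot}$ and by $Y_{t,\cdot}$ agree on any common excursion; with such a marking the cocycle identity $Y_{s,u}=Y_{t,u}\circ Y_{s,t}$ on $\RR$ upgrades to the flow property $K_{s,u}=K_{s,t}K_{t,u}$, by distinguishing whether $t$ lies in the same excursion as $u$ or whether a return to $0$ intervenes. Properties (i)--(iii) are then read off the construction: (i) holds before $\tau^x_s$, (ii) is the support statement, and (iii) holds because the marks are independent of $W$ and $m^\pm$-distributed. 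Measurability, stationarity, independence of increments and the $L^2$-continuity requirements of Definition \ref{defsfk} are checked by arguments analogous to those in Section \ref{cons}; uniqueness in law follows because (i)--(iii) together with the independence of the excursion marks determine the family of $n$-point motions, hence by \cite{MR2060298} the law of the flow.

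For (b), I start from an arbitrary solution $(K,W)$ of $(E)$ and reconstruct $(Y,m^+,m^-)$. Applying the SDE to functions supported in the interior of one edge shows that, on $\{t\le\tau^x_s\}$, $K_{s,t}(x)$ solves the one-dimensional Tanaka-type equation, forcing $K_{s,t}(x)=\delta_{e_i(r+W_{s,t})}$; this is (i), and it is the same computation as in the proof of Lemma \ref{noise}. Testing $(E)$ against the full class $\mathcal D$ and using It\^o's formula identifies the averaged one-point motion as a diffusion whose generator is $\tfrac12\frac{d^2}{dr^2}$ on each edge with the Walsh gluing condition weighted by $(\alpha^i)$; its signed radial part is therefore the skew Brownian motion with $\beta=2\alpha^+-1$, which pins down $Y$ and yields (ii). Finally I set $U^\pm_{s,t}(x)=(K_{s,t}(x,E_i))_{i\in I_\pm}$ on the respective events and show, using stationarity, independence of increments and the flow property, that the conditional law of $U^\pm_{s,t}(x)$ given $W$ is a fixed probability measure $m^\pm$ on $\Delta_{n^\pm}$ not depending on $(s,t,x)$ and independent of $W$; its mean is forced by $\EE[K_{s,t}(x,E_i)]$ to equal the Walsh edge-weights $\alpha^i/\alpha^\pm$, which is exactly $(+)$--$(-)$, and uniqueness of $(m^+,m^-)$ is immediate since (iii) determines them.

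I expect the main obstacle to be, in part (a), the construction of a marking of the excursions that is simultaneously independent of $W$, distributed according to $m^\pm$, and consistent across all pairs $s\le t$ in the sense needed for the flow property --- equivalently, verifying that the prescribed finite-point motions form a consistent Feller family, with the delicate continuity occurring at configurations having points sitting at the vertex. In part (b) the corresponding difficulty is the extraction step: proving that the edge-proportion vector $U^\pm_{s,t}(x)$ is genuinely independent of $W$ with a law independent of $(s,t,x)$, which is the structural fact distinguishing these flows and must be squeezed out of stationarity together with the cocycle and independence-of-increments properties.
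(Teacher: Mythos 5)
Your proposal is correct and takes essentially the same route as the paper: the construction reviewed in Appendix 2 builds $K$ exactly as you describe, from the Burdzy--Kaspi flow $Y$ plus i.i.d.\ marks of law $m^{\pm}$ attached intrinsically to excursions --- implemented there by indexing independent copies $(U^+_r,U^-_r)$ by dyadic numbers and selecting one label per excursion via the deterministic function $f$ applied to the zeros straddling $t$ --- while the consistency obstacle you flag is resolved by Lemma \ref{ko} together with the fact that trajectories of $Y$ driven by the same $W$ first meet at $0$, so merged trajectories share excursions and hence marks. For part (b) and uniqueness in law the paper defers to \cite{MR50101010}, and your extraction sketch (pathwise uniqueness for the skew Brownian equation pinning the radial part to $Y$, then reading off $m^{\pm}$ from the edge-proportion vectors and conditions $(+)$, $(-)$ from the averaged one-point Walsh motion) matches the argument given there.
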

Let $U^+=(U^+(i), i\in I_+)$ and $U^-=(U^-(j), j\in I_-)$ be two random variables with values in $\Delta_{n^+}$ and  $\Delta_{n^{-}}$ such that for each $(i,j)\in I_+\times I_-$
$$\PP(U^+(i)=1)=\frac{\alpha^i}{\alpha^+},\ \ \PP(U^-(j)=1)=\frac{\alpha^{j}}{\alpha^-}.$$
Note that all coordinates of $U^{\pm}$ are equal to $0$ expect one coordinate which is equal therefore to $1$.
With $m^+$ and $m^-$ being respectively the laws of $U^+$ and $U^-$, $K_{s,t}(x)=\delta_{\varphi_{s,t}(x)}$ where $\varphi$ is a SFM. The flow $\varphi$ is also the unique SFM solving $(E)$.\\ 
To $U^+=(\frac{\alpha^i}{\alpha^+}, i\in I_+)$ and $U^-=(\frac{\alpha^j}{\alpha^-}, j\in I_-)$, is associated in the same way a Wiener i.e. $\sigma(W)$-measurable solution $K^W$ of $(E)$ which is also the unique (up to modification) Wiener solution to $(E)$.
\subsection{The case $\alpha^+=\frac{1}{2}$.}
In this case $(E)$ admits only one solution $K^W$ which is Wiener, no other solutions can be constructed by adding randomness to $W$. The expression of $K^W$ is the same as the general case with $Y_{s,t}(x)$ replaced by $x+W_{s,t}$. 
\section{Conditional independence : Proof of Proposition \ref{condi}.}
In this section, we assume that for all $i\in I$, $W^i=W$ for some real white noise $W$. Our purpose is to establish Proposition \ref{condi} already proved in \cite{MR50101111} in a very particular case. The main idea was the following : let $(\varphi^+,W)$ and $(\varphi^-,-W)$ be two SFM's solutions to Tanaka's equation:
$$\varphi^{\pm}_{s,t}(x)=x\pm\int_{s}^{t}\text{sgn}(\varphi^{\pm}_{s,u}(x))dW_u.$$
We know that the laws of $(\varphi^+,W)$ and $(\varphi^-,W)$ are unique \cite{MR2235172}. Let $\varphi=(\varphi^+,\varphi^-)$, then if $(\mathcal F^{\varphi}_{s,t})_{s\le t}$ is i.d.i, the law of $\varphi$ is unique. 
An intuitive explanation for this is that $t\mapsto|\varphi^+_{0,t}(0)|=W_t-\inf_{0\leq u\leq t} W_u$ and $t\mapsto|\varphi^-_{0,t}(0)|=\sup_{0\leq u\leq t}W_u-W_t$ do not have common zeros after $0$ so that $\text{sgn}(\varphi^+_{0,t}(0))$ should be independent of $\text{sgn}(\varphi^-_{0,t}(0))$. In the general situation, the previous reflecting Brownian motions are replaced by two SBM's associated to $W$ and distinct skew parameters.

The proof of Proposition \ref{condi} will strongly rely on the following lemma.
\blem \label{sde}
Let $(\beta_1,\beta_2)\in[-1,1]^2$ with $\beta_1\neq\beta_2$ and $|\beta_2-\beta_1|\ge 2\beta_1\beta_2$.
Let $x, y\in\RR$ and let $X, Y$ be solutions of 
$$X_t=x+W_t+\beta_1 L_t(X)\quad \hbox{ and }\quad Y_t=y+W_t+\beta_2 L_t(Y)$$
where $L_t(X)$ and $L_t(Y)$  denote the symmetric local times at $0$ of $X$ and $Y$. If $x\neq y$ or if $x=y=0$, then a.s. for all $t>0$, $X_t\neq Y_t$.
\elem
\prf
Assume first that $x=y=0$. It is straightforward to see that the Lemma holds when $\beta_1\le 0\le\beta_2$. The other cases follow from Theorem 1.4 (i)-(ii) \cite{MR1880238}. 

Assume now that $x\neq y$ : Let $T=\inf\{t>0;\;X_t=Y_t=0\}$. Then necessarily, if $T<\infty$, we have  $X_T=Y_T=0$. So we can conclude using the strong Markov property at time $T$.
\qed
\medskip

\noindent\textbf{Proof of Proposition \ref{condi}.} 
To simplify the notation, for $i\in\{1,2\}$, $G^i$, $\beta_i$, $I^i$, $I^{i,\pm}$ and ${K}^i$ will denote respectively $\hat{G}^{v_i}$, $\beta_{v_i}$, $I_{v_i}$, $I_{v_i}^\pm$ and $\hat{K}^{v_i}$. We will also denote the edges of $G^i$ by $(e^i_j)_{j\in I^i}$ and set $\cF_{s,t}=\cF^{{K}^{1}}_{s,t}\vee\cF^{{K}^{2}}_{s,t}$ for all $s\le t$.\\
It is easy to see that if ${K}^{1}$ and ${K}^{2}$ are independent given $W$, then $(\cF_{s,t})_{s\le t}$ is i.d.i.

Assume now that $(\cF_{s,t})_{s\le t}$ is i.d.i.
For $i\in\{1,2\}$, let $Y^i$ be the BK flow associated to $W$ and $\beta_{i}$. Using the flow property, the stationarity of the flows and the fact that $(\cF_{s,t})_{s\le t}$ is i.d.i., we only need to prove that for all $t>0$, 
\begin{equation}\label{ddd}
{K}_{0,t}^{1}  \; \hbox{ and } \; {K}_{0,t}^{2}\ \text{are independent given}\ W.
\end{equation}
For $n\geq 1$ and $i\in\{1,2\}$, let 
 $({x}^i_j={e}^i_{k^i_j}(r^i_j), 1\le j\le n)$ be $n$ points in ${G}^i$, where $k^i_j\in I^i$ and $r^i_j\in \RR$. Define 
$$\tau^{i}_j=\inf\{u\geq 0 : r^i_j+W_{0,u}=0\}.$$
Proving \eqref{ddd} reduces to prove that 
\begin{equation}\label{dddb}
({K}_{0,t}^{1}(x^1_j))_{1\le j\le n}  \; \hbox{ and } \; ({K}_{0,t}^{2}(x^2_j))_{1\le j\le n}\ \text{are independent given}\ W
\end{equation}
for arbitrary $n$ and $(x^i_j)$.

Note that when $t\le \tau^i_j$, then $K^i_{0,t}(x^i_j)$ is a measurable function of $W$. For $J^1$ and $J^2$ two subsets of $\{1,\dots,n\}$, denote 
$$A_{J^1,J^2}=\bigg\{t>\tau^i_j \hbox{ if and only if } j\in J^i\ \text{for all}\ i=1,2\bigg\}$$
which belongs to $\sigma(W)$. Then proving \eqref{dddb} reduces to check that (for all $J^1$ and $J^2$), given $W$, on  $A_{J^1,J^2}$,
\begin{equation}\label{dddc}
({K}_{0,t}^{1}(x^1_j))_{j\in J^1}  \; \hbox{ and } \; ({K}_{0,t}^{2}(x^2_j))_{j\in J^2}\ \text{are independent.}
\end{equation}
For $j\in J^i$, define
$$g^i_j=\sup\{u\leq t : Y^i_{0,u}(r^i_j)=0\}.$$
Note that a.s. on $A_{J^1,J^2}$, by Lemma \ref{sde}, 
$$\big\{g^1_j : \; j\in J^1\}\cap\big\{g^2_j : \; j\in J^2\}=\emptyset.$$
Let $\mathcal{J}=\{J_k;\; 1\le k\le m\}$ be a partition of $\big(\{1\}\times J^1\big) \cup \big(\{2\}\times J^2\big)$ such that for all $k$, we have $J_k\subset \{i\}\times J^i$ for some $i\in\{1,2\}$ and define the event 
\begin{eqnarray*}
B_{\mathcal{J}}
&=& \left\{g^i_j=g^{i'}_{j'}\ \text{if and only if}\ \exists k \ \text{such that}\ \big((i,j),(i',j')\big)\in J_k\times J_k\right\}\\
&& \bigcap\left\{\forall k<k';\ \text{if} \ \big((i,j),(i',j')\big)\in J_k\times J_{k'} \ \text{then}\ g^i_j < g^{i'}_{j'}\right\}\\
&& \bigcap\;  A_{J^1,J^2}.
\end{eqnarray*}
Then a.s. $\{B_{\mathcal{J}}\}_{\mathcal{J}}$ is a partition of $A_{J^1,J^2}$.\\
\noindent For all $k$, choose $(i_k,j_k)\in J_k$ and denote $g_k=g^{i_k}_{j_k}$.
Let $u_1<\cdots<u_{m-1}$ be fixed dyadic numbers and 
$$C:=C_{u_1,\dots,u_{m-1}}=B_{\mathcal{J}}\cap \{g_k < u_k < g_{k+1};\; 1\le k\le m-1\}.$$
Let $u=u_{m-1}$. On $C$, for all $(i,j)\in J_m$, ${K}^{i}_{0,t}(x^i_j)$ is $\cF_{u,t}\vee \mathcal F^{W}_{0,u}$-measurable. 
Indeed : Fix $(j_+,j_-)\in I^{i,+}\times I^{i,-}$ and define 
$$\begin{array}{ll}
X^{i,j}_u=\begin{cases}
{e}^{i}_{j_+}(Y^i_{0,u}(r^i_j))&\text{if}\  Y^i_{0,u}(r^i_j)\geq 0,\\
{e}^{i}_{j_-}(Y^v_{0,u}(r^i_j))\ &\text {if}\ Y^i_{0,u}(r^i_j)<0,\\
\end{cases}
\end{array}$$
\noindent Then $X^{i,j}_u$ is $\mathcal F^{W}_{0,u}$-measurable, 
$$\inf\{r\geq u : {K}^{i}_{u,r}(X^{i,j}_u)=\delta_0\}=\inf\{r\geq u : Y^{i}_{0,r}(r^i_j)=0\}\le g_m$$
and ${K}^i_{0,t}({x}^i_j)={K}^{i}_{u,t}(X^{i,j}_u)$. 

Moreover, on $C$,  for all $(i,j)\in \cup_{k=1}^{m-1}J_k$, $K^{i}_{0,t}({x}^i_j)$ is ${\cF}_{0,u}\vee \mathcal F^{W}_{u,t}$-measurable (since $Y^{i}_{0,\cdot}(r^i_j)$ do not touch $0$ in the interval $[u,t]$). 
Since $C\in\sigma(W)$, ${\cF}_{0,u}\vee \mathcal F^{W}_{u,t}$ and ${\cF}_{u,t}\vee \mathcal F^{W}_{0,u}$ are independent given $W$, we deduce that on $C$, $\big({K}^{i}_{0,t}({x}^v_j), (i,j)\in J_m\big)$ and $\big({K}^{i}_{0,t}({x}^i_j), (i,j)\in \cup_{k=1}^{m-1}J_k \big)$ are independent given $W$. 
Now an immediate induction permits to show that given $W$, on $C$, \eqref{dddc} is satisfied. Since the dyadic numbers $u_1,\cdots,u_m$ are arbitrary, we deduce that conditionally on $W$, on $B_{\mathcal J}$, \eqref{dddc} is satisfied and finally given $W$, on $A_{J^1,J^2}$, \eqref{dddc} holds.
\qed
\section{Appendix 1: The Burdzy-Kaspi flow}\label{BKF}
In this section, we show how our construction can be simplified on some particular graphs using the BK flow \cite{MR1880238}. Let $(W_{s,t})_{s\le t}$ be a real white noise. For $\beta=\pm 1$, the flow associated to (\ref{equat}) has a simple expression which will be referred as the BK flow. For a fixed $\beta\in]-1,1[$, Burdzy and Kaspi constructed a SFM (see 1.7 in \cite{MR1880238}) satisfying 
\begin{itemize}
\item[(i)] $x\longmapsto Y_{s,t}(x)$ is increasing and c\`adl\`ag for all $s\leq t$ a.s.
\item[(ii)] With probability equal to $1$: $\forall s,x\in \RR, \ \  (Y_{s,t}(x),L_{s,t}(x))$ satisfies (\ref{equat}) and $${L}_{s,t}(x)=\displaystyle{\lim_{\varepsilon \rightarrow 0^+ }}\frac{1}{2\varepsilon }\int_s^t 1_{\{|Y_{s,u}(x)|\leq \varepsilon\}}du.$$
\end{itemize}  
The statement (i) is a consequence of the definition of $Y$ (see also Section 3.1 \cite{MR50101010}) and (ii) can be found in Proposition 1 \cite{MR1880238}. The BK flow satisfies also a strong flow property: 

\begin{proposition}\label{gn}
\begin{itemize}
\item[(1)] Fix $x\in\RR$ and let $S$ be an $(\mathcal F^W_{-\infty,r})_{r\in\RR}$-finite stopping time. Then $Y_{S,S+\cdot}(x)$ is the unique strong solution of the SBM equation with parameter $\beta$ driven by $W_{S,S+\cdot}$. In particular $Y_{S,S+\cdot}$ is independent of $\mathcal F^W_{-\infty,S}$. 
\item[(2)] Let $S\leq T$ be two $(\mathcal F^W_{-\infty,r})_{r\in\RR}$-finite stopping times. Then a.s. for all $u\geq 0, x\in\RR$,
$$Y_{S,T+u}(x)=Y_{T,T+u}\circ Y_{S,T}(x).$$
\end{itemize}
\end{proposition}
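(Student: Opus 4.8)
The plan is to bootstrap from deterministic times to stopping times, using the strong well-posedness of the skew Brownian motion (SBM) equation together with the c\`adl\`ag monotonicity of $x\mapsto Y_{s,t}(x)$ recorded in (i). Throughout I would use that the SBM equation with parameter $\beta\in(-1,1)$ has a unique strong solution, so that its solution is a measurable functional $\Phi(z,B)$ of the starting point $z$ and the driving Brownian motion $B$. By property (ii), for every \emph{deterministic} $s$ and every $x$ the path $Y_{s,s+\cdot}(x)$ solves the SBM equation driven by $W_{s,s+\cdot}$ started at $x$, whence $Y_{s,s+\cdot}(x)=\Phi(x,W_{s,s+\cdot})$ by uniqueness.

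For part (1), I would first treat a simple stopping time $S$ taking finitely many values $s_1<\dots<s_N$. On the event $\{S=s_k\}\in\mathcal F^W_{-\infty,s_k}$ one has $Y_{S,S+\cdot}(x)=Y_{s_k,s_k+\cdot}(x)=\Phi(x,W_{s_k,s_k+\cdot})=\Phi(x,W_{S,S+\cdot})$, so $Y_{S,S+\cdot}(x)=\Phi(x,W_{S,S+\cdot})$ on all of $\Omega$. For a general finite stopping time I would use the dyadic approximations $S_n=2^{-n}\lceil 2^nS\rceil\downarrow S$, each a simple stopping time, and pass to the limit by combining the joint regularity of $(s,t)\mapsto Y_{s,t}(x)$ with the continuity of $\Phi$ in the driving path. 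The independence statement is then immediate: by the strong Markov property of $W$, the increment process $W_{S,S+\cdot}$ is a Brownian motion independent of $\mathcal F^W_{-\infty,S}$, and since $Y_{S,S+\cdot}(x)=\Phi(x,W_{S,S+\cdot})$ is $\sigma(W_{S,S+\cdot})$-measurable, it is independent of $\mathcal F^W_{-\infty,S}$.

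For part (2), fix finite stopping times $S\le T$ and a point $x$. By part (1), $Y_{S,S+\cdot}(x)$ is the unique strong solution driven by $W_{S,S+\cdot}$, hence a continuous semimartingale whose symmetric local time at $0$ is additive over time sub-intervals. Splitting that local time at time $T$ and using $W_{S,T+u}=W_{S,T}+W_{T,T+u}$, the process $u\mapsto Y_{S,T+u}(x)$ solves, on $[T,T+u]$, the SBM equation started at $Y_{S,T}(x)$ and driven by $W_{T,T+\cdot}$. On the other hand, $u\mapsto Y_{T,T+u}\big(Y_{S,T}(x)\big)$ is, by part (1) applied at $T$ with the $\mathcal F^W_{-\infty,T}$-measurable initial point $Y_{S,T}(x)$, the unique strong solution of the same equation with the same initial data. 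Strong uniqueness forces the two to coincide for fixed $x$, simultaneously in $u$, and the exceptional null set is removed simultaneously in $x$ by invoking (i): both sides are c\`adl\`ag and monotone in $x$, so agreement on a countable dense set upgrades to agreement for all $x$.

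The main obstacle I anticipate is the passage from simple to general stopping times in part (1): it requires a genuine stability property of the SBM solution map---beyond mere pathwise uniqueness---ensuring $\Phi(x,W_{S_n,S_n+\cdot})\to\Phi(x,W_{S,S+\cdot})$ as $S_n\downarrow S$, which I would derive from the joint regularity of $(s,t)\mapsto Y_{s,t}(x)$ and the continuity of $W$. A secondary point, in part (2), is the legitimacy of applying the solution map at the stopping time $T$ with the random initial condition $Y_{S,T}(x)$; this is justified because the map is measurable and, by the independence proved in part (1), $W_{T,T+\cdot}$ is independent of $Y_{S,T}(x)$.
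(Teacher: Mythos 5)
Your part (2) is essentially the paper's argument (concatenate the two pieces into one process, use additivity of the symmetric local time at $T$, substitute the random initial point $Y_{S,T}(x)$ into the equation for $Y_{T,T+\cdot}$ via the independence from $\mathcal F^W_{-\infty,T}$ established in part (1), invoke pathwise uniqueness for fixed $x$, and upgrade to all $x$ simultaneously using the monotone c\`adl\`ag property (i)). But your part (1) has a genuine gap at exactly the point you flag. Your plan rests on the convergence $\Phi(x,W_{S_n,S_n+\cdot})\to\Phi(x,W_{S,S+\cdot})$ along dyadic approximations $S_n\downarrow S$, i.e.\ on continuity of the SBM strong-solution map in the driving path, or equivalently on a.s.\ regularity of $s\mapsto Y_{s,\cdot}(x)$ at random times. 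Neither is available from what is recorded: strong solvability only furnishes a \emph{measurable} functional $\Phi$; the SBM equation carries the singular term $\beta L_t$ (after the scale transformation it becomes an SDE with a diffusion coefficient discontinuous at $0$), for which stability in the uniform topology on drivers is not a standard fact; and for the Burdzy--Kaspi flow the dependence on the starting time is genuinely delicate -- bifurcation of trajectories near $0$ as $(s,x)$ varies is precisely the phenomenon studied in \cite{MR2094439}. So the limit passage is an unproved nontrivial claim, and it is the crux of the statement.

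The gap can be closed by deleting the approximation altogether, which is what the paper does: you have under-read property (ii). It is not merely a statement for each deterministic $s$; it asserts a single event of probability one on which $(Y_{s,t}(x),L_{s,t}(x))$ satisfies (\ref{equat}), together with the occupation-time formula for $L$, \emph{simultaneously for all} $s,x\in\RR$. Hence one may substitute the stopping time $S$ directly: a.s.\ for all $t\geq 0$, $Y_{S,S+t}(x)=x+W_{S,S+t}+\beta L_{S,S+t}(x)$. Setting $\mathcal G_t=\mathcal F^W_{-\infty,S+t}$, the process $Y_{S,S+\cdot}(x)$ is $\mathcal G$-adapted and $W_{S,S+\cdot}$ is a $\mathcal G$-Brownian motion by the strong Markov property of $W$; pathwise uniqueness for the SBM equation (Harrison--Shepp, \cite{MR606993}) then identifies $Y_{S,S+\cdot}(x)$ as the unique strong solution, hence as a $\sigma(W_{S,S+\cdot})$-measurable functional, which gives the independence from $\mathcal F^W_{-\infty,S}$ at once. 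With part (1) repaired this way, your argument for part (2) goes through as written.
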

\begin{proof}
(1) Let $\mathcal G_t=\mathcal F^W_{-\infty,S+t}, t\geq 0$. Then $Y_{S,S+\cdot}(x)$ is $\mathcal G$-adapted, $W_{S+\cdot}-W_S$ is a $\mathcal G$-Brownian motion and by (ii) above, a.s. $\forall t\geq 0$,
$$Y_{S,S+t}(x)=x+W_{S,S+t}+\beta L_{S,S+t}(x)$$
with $L_{S,S+t}(x)=\displaystyle{\lim_{\varepsilon \rightarrow 0^+ }}\frac{1}{2\varepsilon }\int_0^t 1_{\{|Y_{S,S+u}(x)|\leq \varepsilon\}}du$. Now (1) follows from the main result of \cite{MR606993}.\\
(2) Fix $x\in\RR$. Then by the previous lines for all $y\in\RR$, a.s. $\forall u\geq 0,$ 
$$Y_{T,T+u}(y)=y+W_{T,T+u}+\beta L_{T,T+u}(y).$$
Since $Y_{T,T+\cdot}$ is independent of $\mathcal F^W_{-\infty,T}$ and $Y_{S,T}$ is $\mathcal F^W_{-\infty,T}$ measurable (by the definition of $Y$), it holds that a.s. $\forall u\geq 0,$
$$Y_{T,T+u}(Y_{S,T}(x))=Y_{S,T}(x)+W_{T,T+u}+\beta L_{T,T+u}(Y_{S,T}(x)).$$
Now, set $$Z_r=Y_{S,r}(x) 1_{\{S\leq r\leq T\}}+Y_{T,r}\circ Y_{S,T}(x)  1_{\{r > T\}}.$$
Then, we easily check that a.s. $\forall r\geq S,$
$$Z_r=x+W_{S,r}+\beta \displaystyle{\lim_{\varepsilon \rightarrow 0^+ }}\frac{1}{2\varepsilon }\int_S^r 1_{\{|Z_u|\leq \varepsilon\}}du.$$
By unicity of the solution, a.s. $\forall r\geq T,\ Y_{S,r}(x)=Y_{T,r}\circ Y_{S,T}(x)$. Now using (i) above, (2) holds a.s. for all $x\in\RR$.

\end{proof}
Given a graph $G$ as in Figure 4
we can construct the unique solution $\varphi$ to $(E)$ as follows: To each vertex $v$, let us attach the BK flow $Y^v$ associated to $W$ and $\beta_v:=2\alpha^+_v-1$. If $x=e_i(r)$, define $\varphi_{s,t}(x)=e_i(r+W_{s,t})$ until hitting a vertex point $v_1$ at time $s_1$, then ''define'' $\varphi_{s,t}(x)$ by $Y^{v_1}_{s_1,t}(0)$ until hitting another vertex $v_2$. After $s_2$, $\varphi_{s,t}(x)$ will be ''given by'' $Y^{v_2}_{s_2,t}(0)$ etc. Using Proposition \ref{gn}, we show that $\varphi$ is a SFM. This is the SBM with Barriers flow unique strong solution to the equation
$$X_t=X_0+W_t+\sum_{v\in V}(2\alpha^+_v-1)L^v_t(X)$$

\begin{figure}[!htb]
\centering
\includegraphics[scale=.5]{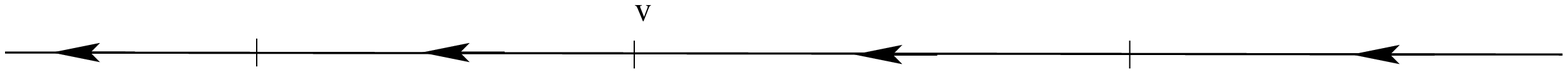}
\caption{SBM with Barriers.}
\end{figure}

%
\medskip
\noindent In \cite{MR50101111}, it is proved that flows solutions of $(E)$ defined on graphs like in Figure 3 can be modified to satisfy strong flow properties similar to Proposition \ref{gn} (2) (see Corollary 2 \cite{MR50101111}). Actually on graphs with arbitrary orientation and transmission parameters and such that each vertex has at most two adjacent edges, we can proceed to a direct construction of ''global'' flows using strong flow properties of ''local'' flows.
\section{Appendix 2: Complement to Section \ref{hbvc}} 
\subsection{A key Lemma on BK flow}
In this section, we work with the same notations as the beginning of Section \ref{hbvc} and let $Y$ be the BK flow as in the previous section associated to $W$ and $\beta:=2\alpha^+-1$. For each $u<v$, let $n$ be first integer such that $]u,v[$ contains a dyadic number of order $n$ and $f(u,v)$ be the smallest dyadic number of order $n$ contained in $]u,v[$ ($f$ is a deterministic machinery which associates to each $(u,v), u<v$ a dyadic number in $]u,v[$). \\
For all $s\leq t,x ,y\in\RR$, with the convention  $\inf \emptyset=+\infty$, set 
\begin{eqnarray}
T^s_{x,y}&=&\inf\{r\geq s,\ Y_{s,r}(x)=Y_{s,r}(y)\},\nonumber\\ 
\tau_{s}(x)&=&\inf\{r\geq s,\ x+W_{s,r}=0\},\nonumber\\
n_{s,t}(x)&=&\inf\bigg\{n\geq 1,\ Y_{s,t}(x-\frac{1}{n})=Y_{s,t}(x+\frac{1}{n})\bigg\}.\nonumber\
\end{eqnarray}
For all $s\leq t, x\in\RR$, let $n=n_{s,t}(x)$ and define
$$v_{s,t}(x)=f(s,T^s_{x-\frac{1}{n},x+\frac{1}{n}})\ \textrm{if} \quad t\geq T^s_{x-\frac{1}{n},x+\frac{1}{n}}$$ 
and $v_{s,t}(x)=0$ otherwise. Now let $(n,v)=\big(n_{s,t}(x),v_{s,t}(x)\big)$ and define      
$$y_{s,t}(x)= f\big(Y_{s,v}(x-\frac{1}{n}),Y_{s,v}(x+\frac{1}{n})\big)\ \textrm{if} \quad  t\geq T^s_{x-\frac{1}{n},x+\frac{1}{n}}$$ 
and  $y_{s,t}(x)=0$ otherwise.
Note that $(s,t,x,\omega)\longmapsto (v_{s,t}(x,\omega),y_{s,t}(x,\omega))$ is measurable and that for all $s<t$, $(v_{s,t},y_{s,t})$ is $\mathcal F^W_{s,t}$-measurable.
\begin{lemma}\label{ko}
Let $s$ and $x$ in $\RR$. Then a.s. for all $t>\tau_s(x)$,  we have
\begin{itemize}
\item[(i)] $n=n_{s,t}(x)<\infty$,
\item[(ii)] $v_{s,t}(x)=f(s,T^s_{x-\frac{1}{n},x+\frac{1}{n}})$ and $y_{s,t}(x)=f(Y_{s,v}(x-\frac{1}{n}),Y_{s,v}(x+\frac{1}{n}))$,
\item[(iii)] $Y_{s,t}(x)=Y_{v,t}(y)$, with $(v,y)=(v_{s,t}(x),y_{s,t}(x))$.
\end{itemize}

\end{lemma}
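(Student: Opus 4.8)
The plan is to reduce all three assertions to a single coalescence statement about the BK flow and then establish that statement. I first note that \textbf{(ii)} is essentially a restatement of \textbf{(i)}: once I know $n=n_{s,t}(x)<\infty$, the equality $Y_{s,t}(x-\frac1n)=Y_{s,t}(x+\frac1n)$ holds by the very definition of $n_{s,t}(x)$, and this forces $t\ge T^s_{x-1/n,x+1/n}$; hence both $v_{s,t}(x)$ and $y_{s,t}(x)$ fall into the ``active'' branch of their definitions, giving the two displayed formulas. So the real content is \textbf{(i)}, together with the passage from \textbf{(i)}--\textbf{(ii)} to \textbf{(iii)}.

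For \textbf{(iii)}, granting \textbf{(i)}--\textbf{(ii)}, I would argue by the flow property and monotonicity. Since $v=v_{s,t}(x)$ is a dyadic number in $\big(s,T^s_{x-1/n,x+1/n}\big)$, the trajectories from $x\pm\frac1n$ have not yet coalesced at time $v$, so $Y_{s,v}(x-\frac1n)<Y_{s,v}(x+\frac1n)$ and $y=y_{s,t}(x)$ is a genuine interior dyadic point of that interval. Applying Proposition \ref{gn}(2) at the (dyadic, hence countably many) time $v$ gives $Y_{s,t}(z)=Y_{v,t}(Y_{s,v}(z))$; taking $z=x\pm\frac1n$ and using $t\ge T^s_{x-1/n,x+1/n}$ yields $Y_{v,t}(Y_{s,v}(x-\frac1n))=Y_{v,t}(Y_{s,v}(x+\frac1n))$. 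Monotonicity of $z\mapsto Y_{v,t}(z)$ (property (i) of the BK flow) then squeezes both $Y_{v,t}(y)$ and $Y_{v,t}(Y_{s,v}(x))=Y_{s,t}(x)$ to this common value, which is precisely the claimed identity $Y_{s,t}(x)=Y_{v,t}(y)$.

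It remains to prove \textbf{(i)}, and the key object is the family $T_n:=T^s_{x-1/n,x+1/n}$. By monotonicity of the flow, coalescence of the outer pair forces coalescence of every inner pair, so $T_n$ is non-increasing and $T_\infty:=\lim_n T_n$ exists; moreover coalescence is permanent (again Proposition \ref{gn}(2)), so $n_{s,t}(x)<\infty$ holds for every $t>T_\infty$. Thus \textbf{(i)} for all $t>\tau_s(x)$ is equivalent to the single almost sure inequality $T_\infty\le\tau_s(x)$. The easy half is $T_\infty\ge\tau_s(x)$ when $x\neq0$: before $\tau_s(x)$ the nearby trajectories coincide with the translated Brownian motions $x\pm\frac1n+W_{s,\cdot}$ and remain strictly on one side of $0$, so they cannot meet, whence $T_n\ge\max\big(\tau_s(x-\frac1n),\tau_s(x+\frac1n)\big)\to\tau_s(x)$.

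The hard part will be the reverse inequality $T_\infty\le\tau_s(x)$, i.e.\ that the shrinking pairs $x\pm\frac1n$ coalesce arbitrarily soon after the reference trajectory reaches $0$. I plan to use that, once $Y_{s,\cdot}(x)$ touches $0$, the point $0$ is regular so positive local time accumulates on every interval $(\tau_s(x),\tau_s(x)+\delta]$, and that two skew trajectories coincide from the first instant at which they are simultaneously at $0$ (strong uniqueness, Proposition \ref{gn}(1)); as $n\to\infty$ this first simultaneous visit after $\max\big(\tau_s(x-\frac1n),\tau_s(x+\frac1n)\big)$ converges to $\tau_s(x)$. This is exactly the coalescing property of the BK flow, which I would quote from \cite{MR1880238} (as reviewed in \cite{MR50101010}) and which is the genuine obstacle; it is valid for $\beta\neq0$, the relevant regime here, the case $\beta=0$ being the purely Wiener one of Section 5.2 where $Y_{s,t}(x)=x+W_{s,t}$. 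The case $x=0$ is immediate, since then $\tau_s(0)=s$ and the straddling trajectories from $\pm\frac1n$ coalesce at once, so $T_\infty=s$. Finally I would assemble the countably many almost sure statements (one per dyadic $v$, plus the single coalescence threshold) on one null set, yielding the lemma ``a.s.\ for all $t>\tau_s(x)$''.
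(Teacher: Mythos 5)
The paper offers no argument of its own for this lemma: its proof is the single line ``See Lemma 3 in \cite{MR50101010}''. So there is nothing internal to compare you against line by line; what you have done is reconstruct a skeleton of the cited proof, and most of it is sound. Your observation that (ii) is automatic from (i) is correct: $Y_{s,t}(x-\frac{1}{n})=Y_{s,t}(x+\frac{1}{n})$ forces $T^s_{x-\frac{1}{n},x+\frac{1}{n}}\le t$ by the very definition of $T^s$, so both $v_{s,t}(x)$ and $y_{s,t}(x)$ fall into the active branch. Your derivation of (iii) is also correct: since $v=f(s,T^s_{x-\frac{1}{n},x+\frac{1}{n}})$ is strictly before the coalescence time, $Y_{s,v}(x-\frac{1}{n})<Y_{s,v}(x+\frac{1}{n})$, so $y$ is well defined and interior; applying Proposition \ref{gn}(2) at each fixed dyadic time (countably many, hence a single null set) and then squeezing $Y_{v,t}(y)$ and $Y_{v,t}(Y_{s,v}(x))=Y_{s,t}(x)$ between the two coalesced images of $x\pm\frac{1}{n}$ via monotonicity of $z\mapsto Y_{v,t}(z)$ is exactly the right mechanism. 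The reduction of (i) to the single inequality $T_\infty:=\lim_n T^s_{x-\frac{1}{n},x+\frac{1}{n}}\le\tau_s(x)$, using monotonicity of the coalescence times in $n$ and permanence of coalescence (strong uniqueness at the stopping time, Proposition \ref{gn}(1)), is likewise correct, and you rightly flag the implicit hypothesis $\beta\neq 0$ (for $\beta=0$ the flow is $x+W_{s,t}$ and $n_{s,t}(x)=\infty$, so the lemma fails; the paper's standing assumption $\alpha^+\neq\frac{1}{2}$ covers this). The converse inequality you prove for $x\neq 0$ is true but not needed for the lemma.

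The one soft spot is the crux, which you quote rather than prove, and the form in which you quote it matters. The bare coalescing property of the BK flow (any fixed pair of trajectories meets in a.s.\ finite time) gives $T_n<\infty$ but says nothing about $T_n\downarrow\tau_s(x)$. Moreover, your heuristic -- simultaneous zeros occur immediately after $\tau_s(x)$ because $0$ is regular and local time accumulates -- does not follow from regularity alone: the gap process $D_t=\frac{2}{n}+\beta\bigl(L_t(x+\frac{1}{n})-L_t(x-\frac{1}{n})\bigr)$ is not monotone. For instance, with $\beta>0$, $D_t$ \emph{increases} whenever the upper trajectory sits at $0$ while the lower one is strictly negative, so one cannot conclude that the pair meets as soon as both have visited $0$ and local time starts to flow. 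Closing this requires a genuinely quantitative input, e.g.\ Brownian scaling of the skew flow combined with the strong Markov property at the first zeros: a pair at gap of order $\frac{1}{n}$ near $0$ coalesces in a time of order $n^{-2}$ in law, which together with the monotonicity of $T_n$ upgrades to $T_n\to\tau_s(x)$ a.s. If your citation of \cite{MR1880238} (as exploited in the proof of Lemma 3 of \cite{MR50101010}) is understood to be of this refined statement, your proof is complete and is, in substance, the cited one; if only ``the flow is coalescing'' is invoked, there remains a real gap at precisely the step you yourself identify as the obstacle.
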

\begin{proof}
See Lemma 3 in \cite{MR50101010}.
\end{proof}

\subsection{Construction of a flow of mappings}
In this section, we will use the same notations as in the last paragraph and in Section \ref{hbvc} with the assumption $\alpha^+\neq\frac{1}{2}$ if $n\geq 3$. 
Moreover, we set
$$G^+=\{0\}\cup\cup_{i\in I_+} E_i,\ \ G^-=\{0\}\cup\cup_{i\in I_-} E_i.$$

We will review the construction of the unique flow of mappings solving $(E)$ defined on $G$. Let $W$ be a real white noise.
First we will construct $\varphi_{s,\cdot}(x)$ for all $(s,x)\in\QQ\times G_{\QQ}$ where $G_{\QQ}=\{z\in G, |z|\in \QQ_+\}$. Denote this set of points by $(s_i,x_i)_{i\geq0}$ and write $x_i=e_{j_i}(r_i)$ where $r_i\in\RR$ and $j_i\in\{1,\cdots,n\}$. Let $\gamma^+,\gamma^-$ be two independent random variables respectively taking their values in $I_+$ and in $I_-$ and such that for $i\in I_+$ and $j\in I_-$,
$$\PP(\gamma^+=i)=\frac{\alpha^i}{\alpha^+}  \hbox{ and } \PP(\gamma^-=j)=\frac{\alpha^j}{\alpha^-} .$$
We will construct $\varphi_{s_0,\cdot}(x_0)$, then  $\varphi_{s_1,\cdot}(x_1)$ and so on. Let $\mathbb D$ be the set of all dyadic numbers on $\RR$ and $\{(\gamma^+_{r},\gamma^-_{r}), r\in\mathbb D\}$ be a family of independent copies of $(\gamma^+,\gamma^-)$ which is also independent of $W$. If $x=e_i(r)$, recall the definition $\tau^x_s=\tau_s(r)$ where $\tau_s(r)$ is as in the previous paragraph. For $x_0=e_{j_0}(r_0)$, define $\varphi_{s_0,\cdot}(x_0)$ by
$$\varphi_{s_0,t}(x_0)= \begin{cases}
       e_{j_0}(r_0+W_{s_0,t})& \text{if} \ s_0\leq t\leq {\tau}^{x_0}_{s_0}\\
        \ 0 & \text{if} \ t>{\tau}^{x_0}_{s_0}, Y_{s_0,t}(r_0)=0\\ 
        e_{h}(Y_{s_0,t}(r_0)),\ \  & \text{if} \ \gamma^{+}_{r}=h,\ t>{\tau}^{x_0}_{s_0}, Y_{s_0,t}(r_0)>0\\
        e_{h}(Y_{s_0,t}(r_0)),\ \  & \text{if} \ \gamma^{-}_{r}=h,\ t>{\tau}^{x_0}_{s_0}, Y_{s_0,t}(r_0)<0
       \end{cases}$$ 
where $r=f(u,v)$ and $u,v$ are respectively the last zero before $t$ and the first zero after $t$ of $Y_{s_0,\cdot}(r)$ (well defined when $Y_{s_0,t}(r_0)\neq 0$). Now, suppose that $\varphi_{s_0,\cdot}(x_0),\cdots,\varphi_{s_{q-1},\cdot}(x_{q-1})$ are defined and let $\{(\gamma^+_r,\gamma^-_r), r\in\mathbb D\}$ be a new family of independent copies of $(\gamma^+,\gamma^-)$ (that is independent of all vectors $(\gamma^+,\gamma^-)$ used until $q-1$ and independent also of $W$). Let $$t_0=\inf\left\{u\geq s_{q} : Y_{s_{q},u}(r_q)\in \{Y_{s_i,u}(r_i), i\in [0,q-1]\}\right\}.$$
Since $t_0<\infty$, let $i\in[0,q-1]$ and $(s_i,r_i)$ such that $Y_{s_q,t_0}(r_q)=Y_{s_i,t_0}(r_i)$. Now define $\varphi_{s_q,\cdot}(x_q)$ by 
$$\varphi_{s_q,t}(x_q)= \begin{cases}
       e_{j_q}(r_q+W_{s_q,t})& \text{if} \ s_q\leq t\leq {\tau}^{x_q}_{s_q}\\
        \ 0 & \text{if} \ {\tau}^{x_q}_{s_q}<t<t_0, Y_{s_q,t}(r_q)=0\\ 
        e_{h}(Y_{s_0,t}(r_0)),\ \  & \text{if} \ \gamma^{+}_{r}=h,\ {\tau}^{x_q}_{s_q}<t<t_0, Y_{s_q,t}(r_q)>0\\
        e_{h}(Y_{s_q,t}(r_q)),\ \  & \text{if} \ \gamma^{-}_{r}=h,\ {\tau}^{x_q}_{s_q}<t<t_0, Y_{s_q,t}(r_q)<0\\
        \varphi_{s_i,t}(x_i) & \text{if} \ t\geq t_0\\
       \end{cases}$$ 
where $r$ is defined as in $\varphi_{s_0,\cdot}(x_0)$ (from the skew Brownian motion $Y_{s_q,\cdot}(r_q)$). In this way, we construct $(\varphi_{s_i,\cdot}(x_i))_{i\geq 0}$.\\
\noindent\textbf{Extension.}
Now we will define entirely $\varphi$. Let $s\leq t$, $x\in G$ such that $(s,x)\notin\QQ\times G_{\QQ}$. If $x=e_i(r), s\leq t\leq \tau^x_s$, define $\varphi_{s,t}(x)=e_i(r+W_{s,t})$. If $t>\tau^x_s$, let $m$ be the first nonzero integer such that $Y_{s,t}(r-\frac{1}{m})=Y_{s,t}(r+\frac{1}{m})$ (when $m$ does not exist we give an arbitrary definition to $\varphi_{s,t}(x)$). Then consider the dyadic numbers 
\begin{equation}\label{nbvc}
v=f\left(s,T^s_{r-\frac{1}{m},r+\frac{1}{m}}\right),\ \ r'=f\left(Y_{s,v}(r-\frac{1}{m}),Y_{s,v}(r+\frac{1}{m})\right)
\end{equation}
and finally set $\varphi_{s,t}(x)=\varphi_{v,t}(z)$ where
\begin{equation}\label{pok}
z=e_1(r')\ \text{if}\ \ r'\geq 0\ \text{and}\ \ z=e_{n^++1}(r')\ \text{if}\ \ r'<0.
\end{equation}
Note that $\varphi_{s,t}(x,\omega)$ is measurable with respect to $(s,t,x,\omega)$.\\
By Lemma 3 \cite{MR50101010}, for a ''typical'' $(s,x)$ a.s. for all $t>\tau^x_s$, $m$ is finite.  Note also that : for all $s\le t$, $x=e_i(r)\in G$ a.s. 
\begin{equation}\label{plkm}
|\varphi_{s,t}(x)|=|Y_{s,t}(r)|\ \text{and}\ \varphi_{s,t}(x)\in G^{\pm}\Leftrightarrow \ \pm Y_{s,t}(r)\geq 0.
\end{equation}
This is clear when $(s,x)\in\QQ\times G_{\QQ}$ and remains true for all $s, t$ and $x$ by Lemma \ref{ko} (iii). The independence of increments of $\varphi$ is clear and the stationarity comes from the fact that for all $s\le t$ and $x=e_i(r)\in G$ (even when $(s,x)\in\QQ\times G_{\QQ}$), if $v$ and $r'$ are defined by (\ref{nbvc}), then on the event $\{t>\tau^x_s\}$, a.s. $\varphi_{s,t}(x)=\varphi_{v,t}(z)$ with $z$ given by (\ref{pok}).\\
Writing Freidlin-Sheu formula (see Theorem 3 in \cite{MR50101010}) for the Walsh Brownian motion $t\mapsto\varphi_{s,s+t}(x)$ and using (\ref{plkm}), we see that $\varphi$ solves $(E)$.\\
The flow $\varphi$ is the unique SFM solving $(E)$ in our case. When $\alpha^+=\frac{1}{2}$, the BK flow is the trivial flow $x+W_{s,t}$ which is non coalescing. The above construction cannot be applied if $n\geq 3$, no flow of mappings solving $(E)$ can be constructed in this case.
\begin{remark} Recall the text after Theorem \ref{jkl} (the SFM case). Then $(U^+,U^-)$ can be identified with a couple $(\gamma^+,\gamma^-)$ with law as described above. We have seen that working directly with $(\gamma^+,\gamma^-)$ makes the construction more clear.

\end{remark}

\subsection{The other solutions.}
Suppose $\alpha^+\neq\frac{1}{2}$ and let $m^+$ and $m^-$ be two probability measures as in Theorem \ref{jkl}. Then, to $(m^+,m^-)$ is associated a SFK $K$ solution of $(E)$ constructed similarly to $\varphi$. Let $U^+=(U^+(i))_{i\in I_+}$ and $U^-=(U^-(j))_{j\in I_-}$ be two independent random variables with values in $[0,1]^{n^+}$ and $[0,1]^{n^-}$ such that
$$U^+\overset{law}{=}m^+,\ \ U^-\overset{law}{=}m^-.$$
In particular a.s. $\sum_{i\in I_+}U^{+}(i)=\sum_{i\in I_-}U^{-}(j)=1$. Let $\{(U^+_r,U^-_r), r\in\mathbb D\}$ be a family of independent copies of $(U^+,U^-)$ which is independent of $W$. 
Then define
$$K_{s_0,t}(x_0)= \begin{cases}
     \delta_{e_{j_0}(r_0+W_{s_0,t})}& \text{if} \ s_0\leq t\leq {\tau}^{x_0}_{s_0}\\
       \ \delta_0 & \text{if} \quad t>{\tau}^{x_0}_{s_0}, \;Y_{s_0,t}(r_0)=0\\ 
     \displaystyle{\sum_{i\in I_+}}U^{+}_{r}(i) \delta_{e_{i}(Y_{s_0,t}(r_0))},\ \  & \text{if} \quad t>{\tau}^{x_0}_{s_0}, \; Y_{s_0,t}(r_0)>0\\
         \displaystyle{\sum_{j\in I_-}}U^{-}_{r}(j)\delta_{e_{j}(Y_{s_0,t}(r_0))},\ \  & \text{if} \quad t>{\tau}^{x_0}_{s_0},\; Y_{s_0,t}(r_0)<0
       \end{cases}$$ 
       where  $U_r^{+}=(U^{+}_r(i))_{i\in I_+}, U_r^{-}=(U_r^{-}(j))_{j\in I_-}$ and $r$ is the same as in the definition of $\varphi_{s_0,\cdot}(x_0)$. Now $K$ is constructed following the same steps as $\varphi$.\\
\bibliographystyle{plain}
\bibliography{Bil5}
\end{document}